\newtheorem{theorem}{Theorem}[section]
\newtheorem{conj}{Conjecture}[section]
\newtheorem{lemma}{Lemma}[section]
\newtheorem{remark}{Remark}[section]
\newtheorem{coro}{Corollary}[section]
\newtheorem{prop}{Proposition}[section]
\newtheorem{definition}{Definition}[section]
\newtheorem{prob}{Problem}[section]
\def\RR{{\mathrm R}}
\def\WW{{\mathrm W}}
\def\Rc{{\mathrm {Rc}}}
\def\SS{{\mathrm S}}
\def\KK{{\mathrm K}}
\def\diag{\mathrm{Diag}}
\def\id{\mathrm{Id}}
\def\det{\mathrm{det}}
\begin{document}

\title{Four-manifolds of Pinched Sectional Curvature}

\author{Xiaodong Cao}

\address{Department of Mathematics,
 Cornell University, Ithaca, NY 14853-4201}
\email{cao@math.cornell.edu}

\author{Hung Tran}

\address{Department of Mathematics and Statistics,
Texas Tech University, Lubbock, TX 79413}
\email{hung.tran@ttu.edu}

\renewcommand{\subjclassname}{%
  \textup{2010} Mathematics Subject Classification}
\subjclass[2010]{Primary 53C25}

\date{ \today}
\begin{abstract} In this paper, we study closed four-dimensional manifolds. In particular, we show that, under various new pinching curvature conditions (for example, the sectional curvature is no more than $ \frac{5}{6}$ of the smallest Ricci eigenvalue) then the manifold is definite. If restricting to a metric with harmonic Weyl tensor, then it must be self-dual or anti-self-dual under the same conditions. Similarly, if restricting to an Einstein metric, then it must be either the complex projective space with its Fubini-Study metric, the round sphere or their quotients. Furthermore, we also classify Einstein manifolds with positive intersection form and an upper bound on the sectional curvature. 
\end{abstract}
\maketitle

\section{Introduction}
A fundamental theme in mathematics is to study the relation between the geometry and topology. The geometry is normally realized by some curvature conditions while the topology would involve invariants such as Betti numbers, Euler characteristic, or Hirzebruch signature. One of many famous questions by H. Hopf in that theme is the following. 
\begin{conj}
	(Hopf) $\mathbb{S}^2\times \mathbb{S}^2$ does not admit a Riemannian metric with positive sectional curvature.
\end{conj}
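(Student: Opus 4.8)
The plan is to work in the Gauss--Bonnet--Chern arena that governs curvature--topology statements in dimension four, and to feed the outcome into the definiteness theorem announced above. For a closed $M^4$ one has
\[
\chi(M) = \frac{1}{8\pi^2}\int_M \Big( |W|^2 - \frac{1}{2}|\mathring{\Rc}|^2 + \frac{R^2}{24} \Big)\, dV , \qquad \tau(M) = \frac{1}{12\pi^2}\int_M \big( |W^+|^2 - |W^-|^2 \big)\, dV .
\]
For $\mathbb{S}^2\times\mathbb{S}^2$ one has $\chi = 4$ and $\tau = 0$, and the intersection form is the hyperbolic plane $H$, which is \emph{indefinite}. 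Hence the sign of $\chi$ is harmless here and the signature vanishes, so the leverage must come from the shape of $H$ rather than from an integral sign. The strategy is then to show that a metric of positive sectional curvature on $\mathbb{S}^2\times\mathbb{S}^2$ would have to satisfy a pointwise pinching of the type considered above --- say $\sec_{\max} \le \tfrac{5}{6}\lambda_1$, with $\lambda_1$ the smallest Ricci eigenvalue --- whereupon the earlier theorem forces the intersection form to be definite, a contradiction.

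Concretely, I would first attempt a purely algebraic estimate: fix a point $p$, diagonalize the curvature operator there, and try to bound $\sec_{\max}(p)/\lambda_1(p)$ using only the inequalities imposed on the entries of $\Rm$ by positivity of all sectional curvatures. If that fails --- and I expect it to --- the fallback is a global argument: deform the given metric along the Ricci flow and try to prove that $\sup_M \sec_{\max}/\lambda_1$ is driven below $\tfrac{5}{6}$ in finite time, using the evolution of the curvature operator together with the maximum principle; or, when an isometric $\mathbb{S}^1$ action is available, pass to the quotient and carry out a symmetry reduction in the spirit of Hsiang--Kleiner and Wilking to constrain the curvature profile directly. Either way, the desired output is a metric on $\mathbb{S}^2\times\mathbb{S}^2$ with $\sec > 0$ and $\sec_{\max} \le \tfrac{5}{6}\lambda_1$, which the definiteness theorem then rules out.

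The main obstacle --- and the reason the statement is recorded here as a conjecture rather than a theorem --- is precisely that positivity of sectional curvature is a weak, pointwise condition carrying no local mechanism that forces a Ricci pinching of this strength: there is no reason a positively curved metric, were one to exist, should have $\sec_{\max}/\lambda_1$ bounded away from $1$, and more broadly positivity of sectional curvature is already too weak to control even the sign of the Gauss--Bonnet--Chern integrand in higher dimensions (Geroch's algebraic examples in dimension at least six). A Ricci-flow approach founders on the fact that positivity of sectional curvature is not known to be preserved in dimension four; a symmetry reduction only settles the problem under extra isometry hypotheses; and the companion minimal-surface argument --- any two closed minimal surfaces in a positively curved $M^4$ must meet (Frankel), whereas the generator $[\mathbb{S}^2\times\{pt\}]$ of $H_2$ has self-intersection $0$ and so admits disjoint homologous representatives --- breaks down because one cannot guarantee that two such disjoint representatives are \emph{both} minimal. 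Manufacturing enough global analytic control from bare positivity of sectional curvature to invoke the pinching theorem is the step I do not expect to be able to complete.
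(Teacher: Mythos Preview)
The statement you were asked to prove is the Hopf conjecture, which the paper records as a \emph{conjecture}, not a theorem; there is no proof of it in the paper, and indeed it remains open. The paper uses it only as motivation: the actual results (Theorem~\ref{definitetheorem} and Corollary~\ref{S2timeS2}) show that $\mathbb{S}^2\times\mathbb{S}^2$ cannot carry a metric satisfying one of several explicit pinching conditions involving $\KK^\perp$, $\SS$, $\Rc$, and $\lambda_1$, but none of these conditions is implied by positive sectional curvature alone.

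You have correctly identified both the mechanism the paper exploits (definiteness of the intersection form via Bochner-type estimates on harmonic $2$-forms) and the reason it does not yield the Hopf conjecture: there is no known way to pass from $\KK>0$ to a pinching such as $\KK^\perp \le \tfrac{5}{6}k$ with $\Rc\ge k$. Your discussion of the obstacles---the algebraic estimate failing pointwise, Ricci flow not preserving $\KK>0$ in dimension four, symmetry reductions requiring extra hypotheses, and the Frankel/minimal-surface heuristic breaking down---is accurate and well informed. In short, your ``proof proposal'' is not a proof but an honest and correct assessment of why no proof is available; that matches the paper, which offers none either.
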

The intuition is that $\mathbb{S}^2\times \mathbb{S}^2$ is characterized by its topological invariants and some might be an obstruction to the existence of a metric with positive sectional curvature. While the conjecture is still open, it is observed by R. Bettiol that there is a metric on  $\mathbb{S}^2\times \mathbb{S}^2$ with positive biorthogonal curvature \cite{Bettiol14}. At any point $x$, the biorthogonal (sectional) curvature of a plane $P\in T_x(M)$ is defined as
\begin{equation*}
\KK^{\perp}(x, P)=\frac{\KK(x, P)+\KK(x, P^{\perp})}{2},
\end{equation*}
where $P^\perp$ is the orthogonal plane to $P$ and $\KK$ is the sectional curvature. Clearly, positive sectional curvature implies positive biorthogonal curvature. Throughout this paper, our conditions on the biorthogonal curvature are all point-wise, when there is no confusion, we will omit the point $x$ and simply denote as $\KK^{\perp}(P)$.\\

In this paper, we will show that if $\mathbb{S}^2\times \mathbb{S}^2$ admits a metric with positive biorthogonal curvature, then, at each point, such curvature must be polarized across different planes. That is, the maximum will be relatively high in comparison with the minimum. Indeed, that follows from a more general theorem which partly determines the topology of a manifold if assuming
one of several curvature assumptions. Precisely, in addition to the pinched sectional curvature condition discussed above, we also consider inequalities between sectional curvature and scalar curvature $\SS$ and Ricci curvature $\Rc$. In that direction, our results can also be considered as progress in addressing the following problem by S. T. Yau. 
\begin{prob}
	(S.T. Yau \cite[Problem 12]{Yauopen93}) The famous pinching problem says that on a compact simply connected manifold if $\KK_{\min}>\frac{1}{4}\KK_{\max}>0$ then the manifold is homeomorphic to a sphere. If we replace $\KK_{\max}$ by normalized scalar curvature, can we deduce similar pinching results? 
\end{prob}

Indeed, we have the following statement.   
\begin{theorem}
\label{definitetheorem}
Let $(M,g)$ be a closed four-dimensional manifold with positive scalar curvature and let $\lambda_1>0$ be its first eigenvalue of the Laplacian on functions. Suppose one of the following conditions holds:
\begin{enumerate}
\item $\KK^\perp\leq \frac{\SS(2\SS+9\lambda_1)}{12(\SS+3\lambda_1)};$
\item There exists a  $k>0$, such that $\Rc\geq k$ and $\KK^\perp \leq \frac{5}{6}k$.
\item $\KK^\perp\geq \frac{\SS^2}{24(\SS+3\lambda_1)};$
\item $\KK^\perp_{min}\geq \frac{\SS}{2(2\SS+9\lambda_1)} \KK^\perp_{max};$
\end{enumerate}
  Then $M$ has definite intersection form.
\end{theorem}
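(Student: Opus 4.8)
\noindent The plan is to run the Bochner technique on harmonic two-forms, but to feed it a sharp pointwise identity rewriting the Weitzenb\"ock curvature term through the biorthogonal curvature $\KK^\perp$, and then to insert the dependence on $\lambda_1$ by balancing a refined Kato inequality against the Poincar\'e inequality. First I would reduce the statement to Hodge theory: after fixing an orientation, the intersection form on $H^2(M;\mathbb R)$ has signature $(b_2^+,b_2^-)$, and $b_2^\pm=\dim\mathcal H^\pm$, where $\mathcal H^\pm$ is the space of harmonic (anti-)self-dual two-forms; definiteness of the form means exactly that $\mathcal H^+=0$ or $\mathcal H^-=0$ (both this and $\KK^\perp$ are orientation-independent). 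So I argue by contradiction: assume $\mathcal H^+\neq 0\neq\mathcal H^-$ and fix nonzero $\omega^+\in\mathcal H^+$, $\omega^-\in\mathcal H^-$; by unique continuation each $\omega^\pm$ is nonzero off a closed set of measure zero, so the pointwise normalizations below are legitimate almost everywhere.

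The analytic ingredient is the Weitzenb\"ock formula for a harmonic self-dual two-form, $\nabla^*\nabla\omega^\pm=2W^\pm(\omega^\pm)-\tfrac{\SS}{3}\omega^\pm$, which after pairing with $\omega^\pm$ gives
\[
\tfrac12\,\Delta|\omega^\pm|^2+|\nabla\omega^\pm|^2=2\langle W^\pm(\omega^\pm),\omega^\pm\rangle-\tfrac{\SS}{3}|\omega^\pm|^2,
\]
with $\Delta=\nabla^*\nabla$ on functions. The geometric ingredient is an algebraic lemma: given a unit $\phi\in\Lambda^+$ and a unit $\psi\in\Lambda^-$ at a point, one may choose an oriented orthonormal frame $\{e_i\}$ with $\phi=\tfrac1{\sqrt2}(e^1\wedge e^2+e^3\wedge e^4)$ and $\psi=\tfrac1{\sqrt2}(e^1\wedge e^2-e^3\wedge e^4)$; evaluating the curvature operator, the off-diagonal term $\Rm(e_1,e_2,e_3,e_4)$ cancels in the sum, so
\[
\langle W^+(\phi),\phi\rangle+\langle W^-(\psi),\psi\rangle=2\,\KK^\perp\!\big(\mathrm{span}\{e_1,e_2\}\big)-\tfrac{\SS}{6},
\]
which therefore lies in the interval $[\,2\KK^\perp_{min}-\tfrac{\SS}{6},\ 2\KK^\perp_{max}-\tfrac{\SS}{6}\,]$. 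Applying this at almost every point with $\phi=\omega^+/|\omega^+|$ and $\psi=\omega^-/|\omega^-|$ supplies the decisive curvature control. It is essential that \emph{both} a self-dual and an anti-self-dual harmonic form be present, since with only one of them $\Rm(e_1,e_2,e_3,e_4)$ is not controlled by $\KK^\perp$ alone; this is also why the conclusion is mere definiteness rather than $b_2=0$ (the self-dual K\"ahler form on $(\mathbb{CP}^2,g_{FS})$ is never excluded).

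Next I would form an appropriate combination of the two Weitzenb\"ock identities --- for instance dividing them by $|\omega^+|^2$ and $|\omega^-|^2$ and adding, so that the $\Delta\log|\omega^\pm|^2$ terms integrate away --- and then estimate the surviving terms $|\nabla\omega^\pm|^2/|\omega^\pm|^2$ and the Dirichlet energy of $\log|\omega^\pm|^2$ by combining the refined Kato inequality for harmonic self-dual two-forms with the variational inequality $\int_M|\nabla h|^2\geq\lambda_1\int_M h^2$ for mean-zero $h$, applied to a suitably chosen power of $|\omega^\pm|^2$ with optimized exponent. Together with the pointwise bound of the previous paragraph, this produces an integral inequality in which, under hypothesis (1), a manifestly nonnegative quantity is forced to be strictly negative --- the contradiction --- and the precise shape of the threshold $\tfrac{\SS(2\SS+9\lambda_1)}{12(\SS+3\lambda_1)}$ is exactly the balance point between the Kato constant, $\lambda_1$ and $\SS$ (it degenerates to $\tfrac{\SS}{6}$ as $\lambda_1\to 0$ and to $\tfrac{\SS}{4}$ as $\lambda_1\to\infty$). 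Hypotheses (3) and (4) are handled in the same way, now invoking the lower bound $2\KK^\perp_{min}-\tfrac{\SS}{6}$ (for (4) the pinching ratio plays the role of the absolute lower bound).

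Finally, hypothesis (2) follows from (1): $\Rc\geq k>0$ forces $\SS\geq 4k$, and by Lichnerowicz's eigenvalue estimate $\lambda_1\geq\tfrac43 k$; since the right-hand side of (1) is non-decreasing in both $\SS$ and $\lambda_1$ and equals $\tfrac56 k$ at $(\SS,\lambda_1)=(4k,\tfrac43 k)$, the bound $\KK^\perp\leq\tfrac56 k$ implies (1). I expect the genuine difficulty to be the balancing step --- choosing the combination and the exponent so that Kato and Poincar\'e deliver \emph{sharp} constants --- together with the technical point of justifying the integrations by parts, since $\log|\omega^\pm|^2$ is only handled near the zero sets of $\omega^\pm$ by a cut-off and limiting argument.
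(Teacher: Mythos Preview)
Your overall architecture matches the paper's: assume $\mathcal H^+\neq 0\neq\mathcal H^-$, take nonzero harmonic $\omega_\pm$, combine Bochner, refined Kato $|\nabla\omega_\pm|^2\geq\tfrac32|\nabla|\omega_\pm||^2$, and Poincar\'e on a zero-mean combination $|\omega_+|^\alpha-t|\omega_-|^\alpha$ with the exponent tuned (the paper takes $\alpha=\tfrac12$, maximizing $(4\alpha-1)/\alpha^2$). Your reduction of (2) to (1) via Lichnerowicz is also exactly what the paper does.

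The gap is in the curvature step. Your identity $\langle W^+\phi,\phi\rangle+\langle W^-\psi,\psi\rangle=2\KK^\perp(P)-\tfrac{\SS}{6}$ is correct, but it controls only the \emph{sum} $u_++u_-$ of the Weitzenb\"ock terms (here $u_\pm=\tfrac{\SS}{3}-2\langle W^\pm\phi_\pm,\phi_\pm\rangle$). After Kato and Poincar\'e the integrand becomes the pointwise quadratic $t^2|\omega_-|(u_-+\lambda_1)+|\omega_+|(u_++\lambda_1)-2t\lambda_1|\omega_+|^{1/2}|\omega_-|^{1/2}$, and to force it nonnegative one must also control the product $u_+u_-$, equivalently $|u_+-u_-|=4|W_{1234}|$ in your adapted frame. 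That off-diagonal Weyl component is \emph{not} bounded by $\KK^\perp$ in the frame determined by $\omega_\pm/|\omega_\pm|$, so the discriminant does not close. The paper circumvents this by first passing to the crude bound $u_\pm\geq\tfrac{\SS}{3}-2\lambda_3^\pm$ and then working in Berger's normal form for $\WW$ (not the frame adapted to $\omega_\pm$): there $\lambda_3^\pm=a_3\pm b_3$ with $\lambda_3^+\lambda_3^-\geq 0$, which yields both $\tfrac{2\SS}{3}-2\lambda_3^+-2\lambda_3^-\geq\SS-4\delta$ and, crucially, $|\lambda_3^+-\lambda_3^-|\leq 2(\delta-\tfrac{\SS}{12})$. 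With these two inequalities the discriminant is $\leq 0$ exactly at the threshold $\delta=\tfrac{\SS(2\SS+9\lambda_1)}{12(\SS+3\lambda_1)}$; your proposal is missing this second estimate. A smaller point: your plan to handle (3) and (4) ``via the lower bound $2\KK^\perp_{\min}-\tfrac{\SS}{6}$'' runs the wrong direction---a lower bound on $\KK^\perp$ yields an \emph{upper} bound on $u_++u_-$, which is useless in Bochner. The paper instead observes, using $\tfrac{\SS}{4}=\KK^\perp(P_1)+\KK^\perp(P_2)+\KK^\perp(P_3)$, that each of (3) and (4) algebraically implies the upper bound (1), and then runs the single argument under (1).
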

\begin{remark}
Part $(3)$ is independently observed by R. Diogenes, E. Ribeiro Jr, and E. Rufino \cite{DRF18}. Also see  \cite{DR18} for a related work. 
\end{remark}
\begin{remark} Parts $(1), (2), (3)$ of Theorem \ref{definitetheorem} also hold when replacing the biorthogonal sectional curvature $\KK^\perp$ by the regular $\KK$.
\end{remark}
\begin{remark} In comparison, if one assumes a stronger condition, $\KK^\perp_{min}\geq \frac{1}{4} \KK^\perp_{max}$ or $\KK^\perp \geq \frac{\SS}{24}$ or  $\KK^\perp \leq \frac{\SS}{6}$, then it is observed that the manifold has nonnegative isotropic curvature; see \cite{Seaman91}, \cite{CR14}. 
\end{remark}

\begin{remark} In dimension four, the Hodge star operator induces a natural decomposition of the vector bundle of 2-forms into spaces of self-dual and anti-self-dual two forms. A manifold is said to have definite intersection form if either the space of harmonic self-dual or anti-self-dual 2-forms is trivial (zero dimension). For a precise definition, see Section \ref{prem}. Furthermore, by quoting the topological classification of M. Freedman \cite{Freedman82}, one can determine the homeomorphic type of the manifold admitting such a metric.
\end{remark} 
In particular, one consequence is the following. 
\begin{coro}\label{S2timeS2}
	$\mathbb{S}^2\times \mathbb{S}^2$ does not admit a metric satisfying any aforementioned condition. 
\end{coro}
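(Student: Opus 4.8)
The plan is to deduce Corollary \ref{S2timeS2} directly from Theorem \ref{definitetheorem} together with the classical computation of the intersection form of $\mathbb{S}^2\times\mathbb{S}^2$. The crux is that this form is \emph{indefinite}, hence can never be made definite by any choice of metric, whereas each of the curvature hypotheses in Theorem \ref{definitetheorem} would make the form definite.

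First I would record the topological input. For a closed oriented four-manifold, having definite intersection form means, via Hodge theory, that the space of harmonic self-dual two-forms or the space of harmonic anti-self-dual two-forms is trivial, i.e. $b_2^+=0$ or $b_2^-=0$; this is a topological, metric-independent condition. For $M=\mathbb{S}^2\times\mathbb{S}^2$ the Künneth formula gives $H^2(M;\mathbb{R})\cong\mathbb{R}^2$, spanned by the Poincaré duals of $\mathbb{S}^2\times\{pt\}$ and $\{pt\}\times\mathbb{S}^2$. Each of these spheres has trivial normal bundle, hence zero self-intersection, while the two meet transversally in a single point; so in this basis the intersection form is the hyperbolic form $\left(\begin{smallmatrix}0&1\\1&0\end{smallmatrix}\right)$, which is nondegenerate and indefinite with $b_2^+(M)=b_2^-(M)=1$. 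Thus $M$ never has definite intersection form.

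Then I would argue by contradiction. Suppose $M$ admitted a metric $g$ satisfying one of the conditions (1)--(4). In (1), (3) and (4) this is to be understood together with the standing hypothesis $\SS>0$ of Theorem \ref{definitetheorem} (and in case (3) positivity of $\SS$ is in any event automatic, since the scalar curvature is a positive multiple of a sum of biorthogonal curvatures); in case (2), the assumption $\Rc\geq k>0$ already forces $\SS\geq 4k>0$. In every case the full hypotheses of Theorem \ref{definitetheorem} are met, so $(M,g)$ would have definite intersection form, contradicting the previous paragraph. Hence no such metric exists, which is precisely the assertion of the corollary.

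I expect essentially no genuine obstacle here: the argument is formal. The only thing requiring any care is the bookkeeping that each of the four conditions is accompanied by, or forces, positive scalar curvature, so that Theorem \ref{definitetheorem} genuinely applies; and the topological fact that $\mathbb{S}^2\times\mathbb{S}^2$ carries the hyperbolic (hence indefinite) intersection form is entirely standard.
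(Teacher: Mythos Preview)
Your proof is correct and follows essentially the same approach as the paper: observe that $\mathbb{S}^2\times\mathbb{S}^2$ has $b_+=b_-=1$, hence indefinite intersection form, and apply Theorem \ref{definitetheorem} for a contradiction. The paper's own proof is a two-line version of yours, simply citing the preliminaries section for $b_+=b_-=1$ rather than spelling out the K\"unneth and intersection-number computation.
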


It turns out that restricting to special metrics yields more precise results. First, we consider a manifold admitting a metric with harmonic Weyl curvature. This is a generalization of the Einstein equation and has been studied intensively; see, for example, \cite{Derd83, mw93, tran16weyl}. In that setting, we have the following.
\begin{theorem}
	\label{selfdualtheorem}
	Let $(M,g)$ be a closed four-dimensional manifold with harmonic Weyl tensor and positive scalar curvature. Let $\lambda>0$ be its first eigenvalue of the Laplacian on functions. Suppose one of the following conditions holds:
	\begin{enumerate}
		\item $\KK^\perp\leq \frac{\SS(2\SS+9\lambda)}{12(\SS+3\lambda)};$
		\item $\Rc\geq k>0$ and $\KK^\perp \leq \frac{5}{6}k$;
		\item $\KK^\perp\geq \frac{\SS^2}{24(\SS+3\lambda_1)};$
		\item $\KK^\perp_{min}\geq \frac{\SS}{2(2\SS+9\lambda)} \KK^\perp_{max}$. 
	\end{enumerate}
	Then the manifold must be either self-dual or anti-self-dual.
\end{theorem}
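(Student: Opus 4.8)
The plan is to combine Theorem~\ref{definitetheorem} with a Bochner argument applied to the half Weyl tensor itself. Since the hypotheses of Theorem~\ref{selfdualtheorem} contain those of Theorem~\ref{definitetheorem} (harmonic Weyl is an extra assumption, not a weakening), that theorem applies and $(M,g)$ has definite intersection form. Reversing the orientation $o$ if necessary, we may assume that the space of harmonic anti-self-dual $2$-forms is trivial, i.e.\ $b^-(M)=0$. It then suffices to prove that the anti-self-dual Weyl operator $W^-$, viewed as a trace-free symmetric endomorphism of $\Lambda^-$, vanishes identically; the opposite case, $W^+\equiv 0$, occurs symmetrically if Theorem~\ref{definitetheorem} instead forces $b^+(M)=0$.

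First I would record the two facts that drive the argument. The harmonic Weyl condition $\delta W=0$ splits as $\delta W^+=0$ and $\delta W^-=0$, so $W^-$ obeys Derdzinski's Weitzenb\"ock identity
\begin{equation*}
\Delta |W^-|^2 = 2|\nabla W^-|^2 + \SS\,|W^-|^2 - 36\det W^-.
\end{equation*}
On the algebraic side, for a trace-free symmetric $3\times3$ matrix one has $36\det W^-\le \SS|W^-|^2$ whenever the largest eigenvalue of $W^-$ is at most $\SS/6$, with equality precisely at the ``K\"ahler pattern'' of eigenvalues $(-\tfrac{\SS}{12},-\tfrac{\SS}{12},\tfrac{\SS}{6})$; moreover the biorthogonal curvature controls exactly this spectral quantity, through the pointwise identity $\KK^\perp_{\max}=\tfrac{\SS}{12}+\tfrac12\bigl(\lambda_{\max}(W^+)+\lambda_{\max}(W^-)\bigr)$, in which the trace-free Ricci part cancels from the biorthogonal average.

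Next I would rerun, with $W^-$ in place of a harmonic anti-self-dual $2$-form, the estimate that proves $b^-(M)=0$ inside Theorem~\ref{definitetheorem}: each of the four pinching conditions, together with the bound on the first eigenvalue $\lambda$ of the Laplacian used there to pass from a borderline pointwise inequality to a spectral one, should yield $36\int_M\det W^-\le \int_M\SS|W^-|^2$. Feeding this into the integrated Weitzenb\"ock identity over the closed manifold forces $\int_M|\nabla W^-|^2=0$ and saturation of the determinant bound pointwise. Hence $W^-$ is parallel; if $W^-\not\equiv 0$ then $|W^-|$ and $\SS$ are positive constants and $W^-$ has the K\"ahler pattern everywhere, so the eigenspace of its simple eigenvalue is a parallel real line sub-bundle of $\Lambda^-$, which upon trivialization produces a parallel orthogonal complex structure compatible with the reversed orientation; by Derdzinski's structure theorem~\cite{Derd83} this makes $(M,g)$ a K\"ahler surface for $-o$. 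But a closed K\"ahler surface has $b^+\ge 1$ for its complex orientation, so $b^-(M)\ge 1$ for $o$, contradicting $b^-(M)=0$. Therefore $W^-\equiv 0$ and $(M,g)$ is self-dual.

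The main difficulty is the third step: verifying that each of the four curvature hypotheses, processed through the identity for $\KK^\perp_{\max}$ and the eigenvalue-of-the-Laplacian input, is strong enough to dominate the cubic term $36\int_M\det W^-$ by $\int_M\SS|W^-|^2$ in the integrated Weitzenb\"ock formula, so that equality can only hold with $W^-$ parallel. This is essentially the same delicate bookkeeping that underlies Theorem~\ref{definitetheorem}; the genuinely new ingredients are the observation that $\delta W=0$ makes Derdzinski's identity available for $W^-$ itself (rather than only for harmonic $2$-forms), and the K\"ahler rigidity analysis that disposes of the equality case.
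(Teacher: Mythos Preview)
Your outline has a real gap at the step you yourself flag as the main difficulty. The inequality you need, $36\int_M\det W^-\le \int_M \SS\,|W^-|^2$, does \emph{not} follow from the hypotheses if you work with $W^-$ alone. Pointwise one has $36\det W^-\le 6\lambda_3^-|W^-|^2$, so you would need $\lambda_3^-\le \SS/6$; but Lemma~\ref{estimatelambda} only gives $\lambda_3^-\le 2\KK^\perp_{\max}-\SS/6$, and under condition~(1) this upper bound equals $\tfrac{\SS(\SS+6\lambda_1)}{6(\SS+3\lambda_1)}>\SS/6$. The reason the first eigenvalue $\lambda_1$ helps in the proof of Theorem~\ref{definitetheorem} is that the test function $|\omega_+|^\alpha-t|\omega_-|^\alpha$ has zero mean, so the Rayleigh quotient converts the gradient term into $\lambda_1\int(|\omega_+|^\alpha-t|\omega_-|^\alpha)^2$; the resulting \emph{cross term} $-2t\lambda_1|\omega_+|^\alpha|\omega_-|^\alpha$ is what balances the shortfall in the zeroth-order terms. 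If you run the Bochner/Kato argument with $W^-$ only, there is nothing to couple it to, the cross term is absent, and the inequality you wrote down simply fails.

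The paper does not pass through Theorem~\ref{definitetheorem} at all. It reruns the \emph{coupled} argument with the pair $(W^+,W^-)$ in place of $(\omega_+,\omega_-)$: Derdzinski's identity supplies the analogue of the Bochner formula for each half, the refined Kato constant improves from $3/2$ to $5/3$, and the optimal exponent becomes $\alpha=1/3$ (maximizing $(6\alpha-1)/\alpha^2$) rather than $\alpha=1/2$. One then studies the integrand as a quadratic in $t$ exactly as before; the discriminant is nonpositive under condition~(1), and the equality analysis forces $|W^+|\,|W^-|=0$ pointwise via Lemma~\ref{estimatelambda}, from which the vanishing of one of $W^\pm$ follows directly. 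Lemma~\ref{relatedconditions} reduces conditions~(2)--(4) to~(1). Neither the definiteness of the intersection form nor the K\"ahler rigidity discussion is needed.
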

Again, the Hodge star operator gives rise to a natural decomposition of the Weyl tensor into self-dual and anti-self-dual Weyl curvature. The conclusion of the theorem means that either one of them is vanishing. In addition, by combining with a result of A. Derdzinski \cite{Derd88}, we obtain a classification.
\begin{coro} \label{classselfdual}
Let $(M,g)$ be a closed four-dimensional manifold with harmonic Weyl tensor and positive scalar curvature. Suppose that $g$ is analytic and the biorthognal sectional curvature satisfies one of the above conditions then $(M,g)$ is either locally conformally flat or homothetically isometric to $\mathbb{CP}^2$ with its Study-Fubini metric or $\mathbb{S}^4$ with the round metric or its quotient.  
\end{coro}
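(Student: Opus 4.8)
The plan is to combine Theorem~\ref{selfdualtheorem} with the structure theory of four-manifolds carrying a harmonic self-dual Weyl curvature, due to A.~Derdzinski \cite{Derd88}. Since each of the four hypotheses is precisely one of the hypotheses of Theorem~\ref{selfdualtheorem}, that theorem applies and $(M,g)$ is self-dual or anti-self-dual; reversing the orientation if necessary (or, when $M$ is nonorientable, passing to the oriented double cover, which is the source of the quotients in the conclusion) we may assume $W^-\equiv 0$. Then the full Weyl tensor equals its self-dual part $W^+$, so the harmonic Weyl hypothesis $\delta W=0$ is literally $\delta W^+=0$: regarded as a trace-free symmetric section of $\mathrm{End}(\Lambda^+)$, $W^+$ is harmonic, and, because the Cotton tensor vanishes, the second Bianchi identity makes it a harmonic $\Lambda^+$-valued two-form. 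This already produces a dichotomy: if $W^+\equiv 0$ then $W\equiv 0$ and $(M,g)$ is locally conformally flat (this alternative contains round $\mathbb{S}^4$ and its quotients); otherwise $W^+\not\equiv 0$, and since $g$, hence $W^+$, is analytic, $W^+$ is nonzero on a dense open set $M_0\subseteq M$.

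Next I would analyse the spectrum of $W^+$ on $M_0$. Write its eigenvalues as $w_1\le w_2\le w_3$, so $w_1+w_2+w_3=0$ and $w_3>0$ on $M_0$. On a self-dual manifold the biorthogonal curvature of a plane with associated unit self-dual two-form $\omega$ equals $\tfrac{1}{12}\SS+\tfrac12\langle W^+\omega,\omega\rangle$, and as the plane ranges over all two-planes $\omega$ ranges over all unit self-dual two-forms, so $\KK^\perp_{\max}=\tfrac{1}{12}\SS+\tfrac12 w_3$ and $\KK^\perp_{\min}=\tfrac{1}{12}\SS+\tfrac12 w_1$. Substituting these identities rewrites whichever of the conditions (1)--(4) is in force as a pointwise inequality in $w_1,w_2,w_3,\SS$; for the three $\lambda$-dependent conditions one also keeps the spectral (Rayleigh-quotient) estimate already exploited in the proof of Theorem~\ref{selfdualtheorem}. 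Feeding this inequality into the Weitzenböck formula for the harmonic form $W^+$, combining with the sharp algebraic inequality relating $\det W^+$ and $|W^+|^{3}$ — whose equality case forces two eigenvalues of $W^+$ to coincide (here it must be $w_1=w_2$, so that the top eigenvalue $w_3$ is simple) — and integrating over $M$, I expect to conclude $w_1=w_2=-\tfrac12 w_3$ at every point of $M_0$; a maximum-principle argument applied to the nonnegative function $w_3$ should moreover rule out zeros of $W^+$, so that $M_0=M$.

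Finally, with $\delta W^+=0$, with $W^+$ nowhere zero, and with simple top eigenvalue everywhere, Derdzinski's theorem \cite{Derd88} applies: an appropriate power of the distinguished eigenvalue of $W^+$ rescales $g$ conformally to a Kähler metric $\tilde g$, so $(M,\tilde g)$ is a compact Kähler surface in the conformal class of $(M,g)$. Since $\SS>0$ and the pinching controls $W^+$, this surface can only be $\mathbb{CP}^2$ with a constant multiple of its Fubini--Study metric; one then checks that $g$ itself is homothetically isometric to it. Together with the dichotomy of the first step this yields exactly the three alternatives in the statement.

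The main obstacle is the eigenvalue analysis of the second step: upgrading the biorthogonal pinching from a one-sided bound on $w_3$ to an \emph{exact} degeneracy of the spectrum of $W^+$, controlling the zero locus of $W^+$ so that Derdzinski's theorem applies globally, and arranging the algebra so that the equality case of the cubic inequality is genuinely attained — and this is precisely where the particular numerical constants in Theorems~\ref{definitetheorem} and~\ref{selfdualtheorem} enter.
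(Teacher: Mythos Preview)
Your proposal takes a substantially different route from the paper, and the part you flag as ``the main obstacle'' is in fact a genuine gap that the paper completely sidesteps.

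The paper's proof is much shorter. After applying Theorem~\ref{selfdualtheorem} to get (say) $\WW^-\equiv 0$, it does \emph{not} attempt to analyse the spectrum of $\WW^+$ or to produce a conformal K\"ahler metric. Instead it invokes a different result of Derdzinski \cite[Corollary~1]{Derd88}: on a four-manifold with harmonic Weyl tensor, at every point where the Ricci tensor is \emph{not} a multiple of the metric, $\WW^+$ and $\WW^-$ have identical spectra. Combined with $\WW^-\equiv 0$, this forces $\WW^+=0$ at every non-Einstein point. The argument then splits cleanly: either every point is Einstein, in which case $(M,g)$ is a self-dual Einstein manifold with $\SS>0$ and Hitchin's theorem \cite[Theorem~13.30]{Besse} gives $\mathbb{CP}^2$, $\mathbb{S}^4$, or a quotient; or there is a non-Einstein point, hence an open set on which $\WW\equiv 0$, and analyticity propagates this to all of $M$, giving the locally conformally flat case. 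No integral estimates, no Weitzenb\"ock formula, and no spectral degeneracy of $\WW^+$ are needed beyond what is already packaged in Theorem~\ref{selfdualtheorem}.

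By contrast, your plan hinges on two steps you have not actually carried out: (i) showing that the pinching forces $w_1=w_2$ everywhere and rules out zeros of $\WW^+$, and (ii) deducing from the resulting conformal K\"ahler structure that the surface is $\mathbb{CP}^2$ with the Fubini--Study metric and that $g$ itself is homothetic to it. Step (i) is not obviously true under the stated hypotheses---the equality case of $36\det\WW^+\le \sqrt{6}\,|\WW^+|^3$ need not be forced by the pinching alone---and step (ii) requires further classification work (there are other compact K\"ahler surfaces with positive scalar curvature). The paper's Einstein/non-Einstein dichotomy via Derdzinski's spectral-matching corollary avoids both difficulties entirely.
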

\begin{remark} Our results improve earlier results obtained by E. Costa and E. Ribeiro \cite{CR14}. After completing this paper, it was brought to our attention that Part (3) of Theorem \ref{selfdualtheorem} was first obtained by Ribeiro \cite{Ribeiro16}. 
\end{remark}
Next, we turn our attention to the setting of an Einstein manifold. It is noted that, in that case, the biorthogonal sectional curvature is identical to the regular sectional curvature. The application of Theorem \ref{selfdualtheorem} yields the followings.  	
\begin{coro}
	\label{Eupperbound}
	Let $(M,g)$ be a smooth compact oriented four-dimensional Einstein manifold with $\Rc=g$. Suppose one of the following conditions holds:
	\begin{enumerate}
		\item $\KK\leq \frac{4(8+9\lambda)}{12(4+3\lambda)};$
		\item $\KK \leq \frac{5}{6}$;
		\item $\KK\geq \frac{2}{3(4+3\lambda_1)};$
		\item $\KK_{min}\geq \frac{2}{8+9\lambda} \KK_{max}$. 
	\end{enumerate}
	then $(M,g)$ is homothetically isometric to $\mathbb{CP}^2$ with its Study-Fubini metric or $\mathbb{S}^4$ with its round metric or its quotient.    
\end{coro}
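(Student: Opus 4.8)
The plan is to deduce Corollary~\ref{Eupperbound} directly from Corollary~\ref{classselfdual} (equivalently, from Theorem~\ref{selfdualtheorem} together with Derdzinski's classification), after verifying that an Einstein metric supplies all the hypotheses of that statement. First I would recall that an Einstein metric has harmonic curvature, hence harmonic Weyl tensor: since $\nabla\Rc=0$ the Cotton tensor vanishes and the contracted second Bianchi identity gives $\delta W=0$. Moreover, by the theorem of DeTurck and Kazdan an Einstein metric is real-analytic in harmonic coordinates, so the analyticity hypothesis of Corollary~\ref{classselfdual} is met. Finally, the normalization $\Rc=g$ forces $\SS\equiv 4>0$, so the positive-scalar-curvature hypothesis holds and every occurrence of $\SS$ in the curvature inequalities may be replaced by $4$.

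Next I would translate each of the four hypotheses of Corollary~\ref{Eupperbound}, stated in terms of the ordinary sectional curvature $\KK$, into the corresponding hypothesis of Corollary~\ref{classselfdual}, stated in terms of $\KK^\perp$. This is elementary, since $\KK^\perp(P)$ is the average of $\KK(P)$ and $\KK(P^\perp)$, whence $\KK_{\min}\le\KK^\perp\le\KK_{\max}$ pointwise. Thus a pointwise bound $\KK\le c$ gives $\KK^\perp\le c$; a pointwise bound $\KK\ge c$ gives $\KK^\perp\ge c$; and $\KK_{\min}\ge c\,\KK_{\max}$ gives $\KK^\perp_{\min}\ge\KK_{\min}\ge c\,\KK_{\max}\ge c\,\KK^\perp_{\max}$. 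Substituting $\SS=4$, one checks that condition $(1)$ reads $\KK^\perp\le\frac{\SS(2\SS+9\lambda)}{12(\SS+3\lambda)}$, condition $(2)$ reads $\Rc\ge 1$ (automatic) together with $\KK^\perp\le\frac56$, condition $(3)$ reads $\KK^\perp\ge\frac{\SS^2}{24(\SS+3\lambda_1)}$, and condition $(4)$ reads $\KK^\perp_{\min}\ge\frac{\SS}{2(2\SS+9\lambda)}\KK^\perp_{\max}$; these are precisely hypotheses $(1)$--$(4)$ of Theorem~\ref{selfdualtheorem} and Corollary~\ref{classselfdual}.

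Applying Corollary~\ref{classselfdual} then yields that $(M,g)$ is either locally conformally flat or homothetically isometric to $\mathbb{CP}^2$ with its Study-Fubini metric, to $\mathbb{S}^4$ with its round metric, or to a quotient of the latter. It remains to dispose of the locally conformally flat alternative: if $g$ is both Einstein and locally conformally flat, then the Weyl tensor and the traceless Ricci tensor both vanish, so the full curvature tensor is determined by the scalar curvature and $g$ has constant sectional curvature; being closed with $\SS=4>0$, $(M,g)$ is then a quotient of the round $\mathbb{S}^4$. Since this is already among the listed outcomes, the proof is complete.

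I do not expect a serious obstacle: the real content is entirely contained in Theorem~\ref{selfdualtheorem} and in Derdzinski's rigidity result used to prove Corollary~\ref{classselfdual}. The only points requiring attention are the passage from $\KK$ to $\KK^\perp$ (immediate from the averaging property, and in fact the relevant extrema of $\KK^\perp$ and $\KK$ coincide on an Einstein four-manifold) and the routine bookkeeping needed to see that an Einstein metric feeds the harmonic-Weyl, analyticity, and positive-scalar-curvature hypotheses into the earlier statements.
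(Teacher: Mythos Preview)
Your proof is correct and follows essentially the same route as the paper: reduce to Theorem~\ref{selfdualtheorem} (via the identification $\KK^\perp=\KK$ for Einstein four-manifolds, which the paper states just before the corollary) and then invoke Hitchin's classification. The only cosmetic difference is that the paper applies Theorem~\ref{selfdualtheorem} and Hitchin directly, whereas you pass through the packaged Corollary~\ref{classselfdual}, which forces you to separately dispose of the locally conformally flat alternative; this is harmless but unnecessary, since for an Einstein metric the set $\Omega_1$ in the proof of Corollary~\ref{classselfdual} is empty and Derdzinski's result is never invoked.
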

\begin{remark} It was brought to our attention that some parts of Corollary \ref{Eupperbound} are independently obtained by Q. Cui and L. Sun \cite{cs18} recently. 
\end{remark}
Here, we would like to bring readers' attention to a famous folklore conjecture regarding Einstein structures. 
\begin{conj}\label{Epositive} A simply connected Einstein four manifold with positive scalar curvature and non-negative sectional curvature must be either $\mathbb{S}^4$ with its round metric, $\mathbb{CP}^2$ with its Fubini-Study metric, $\mathbb{S}^2\times \mathbb{S}^2$ with its product metric. 
\end{conj}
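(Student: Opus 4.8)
The statement is the well-known folklore conjecture, so rather than a complete argument I will lay out the natural line of attack and say precisely where it stalls. Normalize $\Rc=g$, so that $\SS=4$; by Myers' theorem $M$ is compact, and simple connectivity gives $b_1=b_3=0$, hence $\chi=2+b_2$ and $\tau=b_2^+-b_2^-$. The guiding reduction is Hitchin's theorem: a compact self-dual (or anti-self-dual) Einstein four-manifold with $\SS>0$ is isometric to $\mathbb{S}^4$ or $\mathbb{CP}^2$ with their standard metrics. Thus the conjecture would follow if one could show that nonnegative sectional curvature forces either $W^+\equiv 0$ or $W^-\equiv 0$, \emph{or} lands exactly on the degenerate configuration realized by $\mathbb{S}^2\times\mathbb{S}^2$. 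The essential difficulty is that all three models must survive, and $\mathbb{S}^2\times\mathbb{S}^2$ sits on the boundary of the hypothesis.

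First I would convert the curvature hypothesis into spectral data. For an Einstein metric the curvature operator on $\Lambda^2=\Lambda^+\oplus\Lambda^-$ is block diagonal, $\mathcal{R}=\diag\!\big(W^++\tfrac{\SS}{12}\id,\;W^-+\tfrac{\SS}{12}\id\big)$. Writing a decomposable unit two-form $e_1\wedge e_2$ as $\omega^++\omega^-$ with $|\omega^\pm|=1/\sqrt2$, and noting that the Hodge dual $e_3\wedge e_4$ has parts $\omega^+,-\omega^-$ (so that orthogonal planes carry equal sectional curvature, whence $\KK^\perp=\KK$), one computes that the least sectional curvature equals $\tfrac12\big(w_1^++w_1^-+\tfrac{\SS}{6}\big)$, where $w_1^\pm\le w_2^\pm\le w_3^\pm$ are the eigenvalues of $W^\pm$ and $w_1^\pm+w_2^\pm+w_3^\pm=0$. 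Hence $\KK\ge 0$ is equivalent to the single coupled inequality $w_1^++w_1^-\ge -\tfrac{\SS}{6}$. On $\mathbb{S}^2\times\mathbb{S}^2$ one has $w^\pm=(\tfrac{\SS}{6},-\tfrac{\SS}{12},-\tfrac{\SS}{12})$, so $w_1^++w_1^-=-\tfrac{\SS}{6}$: the hypothesis is sharp, and any estimate must admit this case as an equality.

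The analytic engine is then a combination of the topological integral formulas with a Weitzenböck identity. For Einstein metrics the Gauss--Bonnet and Hirzebruch formulas read
\begin{equation*}
\chi=\frac{1}{8\pi^2}\int_M\Big(\frac{\SS^2}{24}+|W^+|^2+|W^-|^2\Big),\qquad \tau=\frac{1}{12\pi^2}\int_M\big(|W^+|^2-|W^-|^2\big),
\end{equation*}
giving the Hitchin--Thorpe inequality $\chi\ge\tfrac32|\tau|$, while the self-dual Weyl curvature obeys
\begin{equation*}
\tfrac12\Delta|W^+|^2=|\nabla W^+|^2+\tfrac{\SS}{2}|W^+|^2-18\det W^+
\end{equation*}
and its anti-self-dual analogue. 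Integrating over $M$ and feeding in the spectral constraint, the goal is to push $|W^+|^2$ (or $|W^-|^2$) to zero and then invoke Hitchin, with the remaining rigidity together with $b_2^+=b_2^-$ pinning down $\mathbb{S}^2\times\mathbb{S}^2$.

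The main obstacle is the cubic term $\det W^+=w_1^+w_2^+w_3^+$ (equivalently $\tfrac13\operatorname{tr}(W^+)^3$). The hypothesis $w_1^++w_1^-\ge-\tfrac{\SS}{6}$ is one-sided and couples the two halves; it does not control $\det W^\pm$ against $\SS\,|W^\pm|^2$ with the sign needed to close the integrated Weitzenböck estimate, and the product metric saturates every natural inequality simultaneously, so the rigidity analysis cannot be routine. This is exactly the gap that the pinched hypotheses of Theorem \ref{selfdualtheorem} and Corollary \ref{Eupperbound} bridge: an upper curvature bound supplies genuine two-sided spectral control and lets the argument conclude self-duality --- at the cost of excluding $\mathbb{S}^2\times\mathbb{S}^2$, as in Corollary \ref{S2timeS2}. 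Under the bare assumption $\KK\ge 0$ no such upper bound is available; the parallel Bochner argument on a harmonic self-dual two-form runs into the same wall, since it would require $\mathcal{R}\ge 0$ on all of $\Lambda^2$ (nonnegative curvature operator), a strictly stronger condition than nonnegative sectional curvature. Closing this gap is, in effect, the content of the conjecture, which is why it remains open.
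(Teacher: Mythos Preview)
Your assessment is correct: the statement is a conjecture, not a theorem, and the paper does not prove it. The paper explicitly presents Conjecture~\ref{Epositive} as a famous open folklore problem, remarking that it ``has attracted tremendous interests but proves to be quite obdurate,'' and then proceeds to prove partial results (Corollary~\ref{Eupperbound}, Theorem~\ref{Epositiveintersectionthm}) under additional hypotheses that exclude $\mathbb{S}^2\times\mathbb{S}^2$.

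Your analysis of why the conjecture resists the Bochner/Weitzenb\"ock machinery is accurate and aligns with the paper's own framing. In particular, your observation that the product metric on $\mathbb{S}^2\times\mathbb{S}^2$ saturates the inequality $w_1^++w_1^-\ge -\SS/6$ and that the cubic $\det W^\pm$ term cannot be controlled from the one-sided bound $\KK\ge 0$ is precisely the obstruction; the paper's results in Section~\ref{harmonicweyl} circumvent it by imposing the upper bound $\KK^\perp\le \frac{\SS(2\SS+9\lambda_1)}{12(\SS+3\lambda_1)}$, which via Lemma~\ref{estimatelambda} and Lemma~\ref{estimatedet} does yield the needed control on $36\det W^\pm$ versus $\SS|W^\pm|^2$. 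So your diagnosis of the gap and its relation to the paper's actual theorems is on target.
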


This conjecture has attracted tremendous interests but proves to be quite obdurate. Nevertheless, there have been various contributions, see \cite{berger61, gl99, tachibana, yangdg00, costa04, brendle10einstein, cao14einstein, caotran4} and the references therein. It is noted that, for the normalization $\Rc=g$, $\KK\geq 0$ implies $\KK\leq 1$ (which is equivalent to 4-nonnegative curvature operator). Thus, it is of great interest to study Einstein structures with an upper bound on sectional curvature. Indeed, Corollary \ref{Eupperbound} improves the earlier results in \cite{caotran4}.  

Furthermore, the manifold in Theorem \ref{definitetheorem} must either have positive intersection form or have its second Betti number vanishing. Interestingly, M. Gursky and C. LeBrun were able to solve Conjecture \ref{Epositive} in case the manifold has positive intersection form\cite{gl99}. 
We observe that the lower bound in Gursky-Lebrun's theorem can be replaced by an upper bound. 
\begin{theorem}
\label{Epositiveintersectionthm}
Let $(M, g)$ be a smooth closed oriented Einstein four-manifold with positive intersection form and  $\Rc=g$. Suppose that \[\KK\leq 1,\]
then $(M,g)$ is homothetically isometric to $\mathbb{CP}_2$ with its standard Fubini-Study metric.   
\end{theorem}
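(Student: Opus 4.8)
The plan is to run a Gauss–Bonnet/signature style argument combined with the Weitzenböck formula for self-dual harmonic two-forms, exactly in the spirit of the Gursky–LeBrun approach, but tracking the upper curvature bound instead of a lower one. With the normalization $\Rc=g$, write $\SS=4$, and decompose the Weyl tensor as $\WW=\WW^++\WW^-$. Since the intersection form is positive definite, orient $M$ so that $b^-=0$; then every harmonic two-form is self-dual. The signature formula gives $\int_M(|\WW^+|^2-|\WW^-|^2)=8\pi^2\tau(M)=\frac{8\pi^2}{3}(2\chi(M)+\ldots)$, or more usefully the two combined Gauss–Bonnet and signature identities
\[
2\chi(M)\pm 3\tau(M)=\frac{1}{4\pi^2}\int_M\Bigl(2|\WW^\pm|^2+\frac{\SS^2}{24}-\frac{|\mathring{\Rc}|^2}{2}\Bigr),
\]
which for an Einstein metric reduce to $2\chi\pm3\tau=\frac{1}{4\pi^2}\int_M\bigl(2|\WW^\pm|^2+\frac{2}{3}\bigr)$ since $|\mathring{\Rc}|=0$ and $\SS^2/24=2/3$.

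Next I would exploit the hypothesis $\KK\le 1$. For an Einstein four-manifold with $\Rc=g$, the condition $\KK\le 1$ translates into a one-sided bound on the curvature operator acting on two-forms — precisely, the largest eigenvalue of $\WW^+$ (equivalently of $\Rm$ restricted to $\Lambda^+$) is controlled. This is where Theorem \ref{selfdualtheorem}(2) is meant to be invoked: $\KK\le 1=\frac{5}{6}k$ is not literally covered since there $k$ would have to be $6/5>1$, so instead the argument must use $\KK\le 1$ directly. The key step is a refined Bochner argument: if $\omega$ is a nonzero self-dual harmonic two-form, then $0=\int_M\langle\Delta\omega,\omega\rangle=\int_M\bigl(|\nabla\omega|^2 - 2\langle\WW^+(\omega),\omega\rangle+\frac{\SS}{3}|\omega|^2\bigr)$, and one estimates $\langle\WW^+(\omega),\omega\rangle$ from above by the top eigenvalue of $\WW^+$, which the bound $\KK\le1$ forces to satisfy $\frac{\SS}{3}-2w^+_{\max}\ge 0$ with equality only in the rigid case. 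Combined with the refined Kato inequality $|\nabla\omega|^2\ge\frac{1}{3}|\nabla|\omega||^2$ (valid for self-dual harmonic forms), one gets that $|\omega|$ is constant and $\WW^+$ has a specific eigenvalue structure; pushing the integral identities then shows $\WW^+$ is, pointwise, the curvature of $\mathbb{CP}^2$ up to scale.

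Then I would combine this with the classification: a closed Einstein four-manifold with $\Rc=g$ whose self-dual Weyl curvature is everywhere the $\mathbb{CP}^2$-model (two equal eigenvalues and one double, in the right ratio) and which has $b^+>0$ must be $\mathbb{CP}^2$ with the Fubini–Study metric — this is where one appeals to Derdzinski's result \cite{Derd88} on Einstein metrics with harmonic self-dual Weyl tensor (Einstein $\Rightarrow$ harmonic Weyl), or alternatively to the Gursky–LeBrun rigidity statement. The hard part will be the intermediate step: squeezing the strict inequality out of $\KK\le1$ to show that $b^+\ne0$ is incompatible with being anything other than $\mathbb{CP}^2$ — i.e., ruling out the "degenerate" possibility that $\WW^+\equiv0$ (which would make $M$ a self-dual Einstein manifold with positive scalar curvature, hence $\mathbb{S}^4$ or $\mathbb{CP}^2$ with reversed orientation, contradicting $b^+>0$ unless it is $\mathbb{CP}^2$) versus the possibility of a non-model $\WW^+$. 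Getting the sharp constant to land exactly at $\KK\le1$ rather than a weaker bound is the delicate point, and it should come from the fact that, under $\Rc=g$, $\KK\le1$ is equivalent to $4$-nonnegativity of the curvature operator, which is precisely the hypothesis in the relevant rigidity theorems.
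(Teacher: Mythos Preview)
Your proposal has a genuine gap at the crucial step. You claim that $\KK\le 1$ (with $\Rc=g$, so $\SS=4$) forces $\frac{\SS}{3}-2\lambda_3^+\ge 0$, i.e.\ $\lambda_3^+\le\frac{2}{3}$. This is false. In Berger's normal form $\KK_{\max}=a_3+\frac{\SS}{12}$ and $\lambda_3^\pm=a_3\pm b_3$, so $\KK\le 1$ only gives $a_3\le\frac{2}{3}$, equivalently $\lambda_3^++\lambda_3^-\le\frac{4}{3}$; it does \emph{not} bound $\lambda_3^+$ by itself. The one-sided estimate $\lambda_3^+\le\frac{2}{3}$ (half of nonnegative isotropic curvature) would require the stronger hypothesis $\KK\le\frac{\SS}{6}=\frac{2}{3}$, as noted in the remark after Theorem~\ref{definitetheorem}. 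Hence the Bochner integral for a self-dual harmonic two-form has no sign under your assumption, and the rigidity step does not go through. Your closing appeal to ``$4$-nonnegativity of the curvature operator'' does not rescue the argument: that condition is strictly weaker than nonnegative isotropic curvature and is not the hypothesis of any off-the-shelf rigidity theorem applicable here. (Incidentally, the refined Kato constant for harmonic $2$-forms is $\frac{3}{2}$, not $\frac{1}{3}$.)

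The paper's proof is entirely different and never runs a Bochner argument on harmonic forms. The missing idea is Lemma~\ref{lowerfromupper} (from \cite{caotran4}), proved by elliptic/maximum-principle methods arising from Ricci-flow computations: the \emph{upper} bound $\KK\le 1$ forces the \emph{lower} bound $\KK\ge\beta:=\frac{1}{28}(7-\sqrt{105})$. With this two-sided bound, Lemma~\ref{eulerestimate} yields $8\pi^2\chi(M)\le(8\beta^2+\tfrac{10}{3})\,\text{Vol}(M)$. Assuming $\WW^-\not\equiv 0$, Gursky--LeBrun's $L^2$ gap (Theorem~\ref{l2Wlower}) applied to $\WW^-$ gives $(2\chi-3\tau)(M)\ge\frac{1}{2\pi^2}\text{Vol}(M)$. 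Combining these with $b_1=0$, $b_-=0$, $b_+\ge 1$ produces the numerical contradiction $8\beta^2+\tfrac{10}{3}\ge 4$. Hence $\WW^-\equiv 0$, and Hitchin's classification of half-conformally-flat Einstein manifolds finishes the proof. The ``upper bound $\Rightarrow$ lower bound'' lemma is precisely what you are missing; without it, $\KK\le 1$ alone carries too little pointwise information for a direct Weitzenb\"ock attack.
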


Here is a sketch of the proof. The main idea is to apply the Bochner techniques in a manner similar to \cite{tran16weyl, caotran4}. That is, we proceed by contradiction. Suppose the desired conclusion is not true then we construct a function with zero average and its Laplacian controllable. The zero average allows us to obtain 
an equality involving the first eigenvalue of the Laplacian on functions. The curvature assumption then allows us to estimate zero-order terms. We also use improved Kato inequalities to deal with gradient terms. Integrating over the manifold would lead to a contradiction. For Theorem \ref{definitetheorem} and Theorem \ref{selfdualtheorem}, we apply that blueprint for harmonic two forms and harmonic Weyl tensor, respectively. It is also noted that the dimension (induced decomposition due to the Hodge star operator) comes into play in an essential way. 

Theorem \ref{Epositiveintersectionthm} has a slightly different flavor. The proof is based on an observation from \cite{caotran4}: making use of elliptic equations, which arise from Ricci flow computation, an upper bound would imply a lower bound. The rest follows from inequalities involving the Euler characteristic and Hirzebruch signature in a manner similar to \cite{gl99}. 

The organization of the paper is as follows. The next section collects preliminaries discussing the curvature decomposition and the relation between the geometry and topology of a closed four-manifold. We also describe some examples and list out various useful estimates. Section \ref{harmonic2f} carries out the proof for Theorem \ref{definitetheorem} and Corollary \ref{S2timeS2}. Then, in Section \ref{harmonicweyl}, we study a metric with harmonic Weyl curvature and prove Theorem \ref{selfdualtheorem} and Corollary \ref{classselfdual}. Finally, Section \ref{einsteinsection} collects the proof of Theorem \ref{Epositiveintersectionthm} and Corollary \ref{Eupperbound}.

{\bf Acknowledgment.} The authors would like to thank Ernani Ribeiro Jr. and Linlin Sun for their valuable comments and for pointing out reference \cite{cs18, DR18, DRF18, Ribeiro16} to us.  Cao's research was partially
supported by a grant from the Simons Foundation (\#280161 and \#585201). Part of this work was done while the second author was visiting the Vietnam Institute for Advanced Study in Mathematics (VIASM). He would like to thank VIASM for financial support and hospitality.  

\section{Preliminaries}
\label{prem}
In this section, we recall fundamental results regarding the geometry and topology of a four-dimensional manifold. Throughout, let $(M,\ g)$ denote an oriented smooth Riemannian manifold and its metric.  
\subsection{\textbf{Curvature decomposition}} 
The geometry of $(M, g)$ is determined by its Riemannian curvature $\RR$ which, due to its symmetry, can be considered as an adjoint operator on the vector bundle of 2-forms, $\wedge^2TM$. The algebraic structure of that vector bundle induces a decomposition of R into orthogonal components: 
\begin{equation}
\label{curvdec} 
\RR=\WW+\frac{\SS g\circ g}{2n(n-1)}+\frac{(\Rc-\frac{\SS}{n}\text{Id})\circ g}{n-2}.
\end{equation}
Here, $\Rc, \SS, \WW$ denote the Ricci curvature, scalar curvature and Weyl curvature, respectively. Also, the Kulkarni-Nomizu product $\circ$ is defined for symmetric 2-tensors $A, B$ as, for vector fields $x, y, z, w$,
\[ (A\circ B) (x, y, z, w):= A(x,z)B(y,w)+B(x,z)A(y,w)-A(x,w)B(y,z)-B(x,w)A(y,z).
\]

In dimension four, the Hodge star operator induces a natural decomposition of the vector bundle of 2-forms,
$\wedge^2 TM$ 
\begin{equation*}
\wedge^2 TM =\wedge^+ M \oplus \wedge^- M.
\end{equation*}
Here $\wedge^{\pm} M$ are the eigenspaces of eigenvalues $\pm 1$, respectively.
Elements of $\wedge^+ M$ and $\wedge^- M$ are called self-dual and
anti-self-dual 2-forms.

Furthermore, since the curvature can be considered as an operator on the space of 2-forms $\RR: \wedge^2 TM
\rightarrow \wedge^2 TM$, the Hodge star induces the following decomposition :
\begin{equation*}
\RR =\left( \begin{array}{cc}
\frac{\SS}{12}\id+\WW^+ & \frac{1}{2}(\Rc-\frac{\SS}{4}\id)\circ g \\
\frac{1}{2}(\Rc-\frac{\SS}{4}\id)\circ g & \frac{\SS}{12}\id+\WW^-
\end{array} \right).
\end{equation*}
Here, the self-dual and anti-self-dual Weyl curvature $\WW^{\pm}$ are the restriction of the Weyl curvature $\WW$ to self-dual and anti-self-dual 2-forms $\wedge^{\pm}M$, respectively. 

In addition, as $\WW$ is traceless and satisfies the first Bianchi identity, there is a normal form discovered by M. Berger \cite{berger61} (see also \cite{st69}).

\begin{prop}
\label{berger}
Let $(M,\ g)$ be a four-manifold. At each point $p\in M$, there exists an orthonormal basis $\{e_i\}_{1\leq i\leq
4}$ of $T_p M$, such that relative to the corresponding basis
$\{e_i\wedge e_j\}_{1\leq i<j\leq 4}$ of $\wedge^2 T_pM$,
$\WW$ takes the form
\begin{equation}
\label{abba}
\WW=\left( \begin{array}{cc}
A & B\\
B & A
\end{array}\right),
\end{equation}
where $A=\diag\{a_1,\ a_2,\ a_3\}$, $B=\diag \{b_1,\ b_2,\ b_3\}$.
Moreover, we have the followings:

\begin{enumerate}
\item  $a_1=\WW(e_1, e_2, e_1, e_2)=\WW(e_3, e_4, e_3, e_4)=\min_{|a|=|b|=1,~ a\perp b}\WW(a,b,a,b)$,
\item $a_3=\WW(e_1, e_4, e_1, e_4)=\WW(e_1, e_4, e_1, e_4)=\max_{|a|=|b|=1,~ a\perp b}\WW(a,b,a,b)$.

\item $a_2=\WW(e_1, e_3, e_1, e_3)=\WW(e_2, e_4, e_2, e_4)$, 

\item  $b_1=\WW_{1234},\ b_2=\WW_{1342},\ b_3=\WW_{1423}$,
\item $a_1+a_2+a_3=b_1+b_2+b_3=0$,

\item  $|b_2-b_1|\leq a_2-a_1,\ |b_3-b_1|\leq a_3-a_1,\ |b_3-b_2|\leq
a_3-a_2$.
\end{enumerate}
\end{prop}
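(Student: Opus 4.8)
The plan is to use the fact that in dimension four the Weyl operator is block diagonal with respect to the splitting $\wedge^2T_pM=\wedge^+M\oplus\wedge^-M$, together with a synchronization of the eigenbases of the two blocks by a single orthonormal frame of $T_pM$. First I would record that $\WW$, viewed as a symmetric endomorphism of $\wedge^2T_pM$, commutes with the Hodge star operator; this is precisely the statement that the $\WW$-contribution in the block decomposition of $\RR$ displayed above is $\diag(\WW^+,\WW^-)$. Hence $\WW$ preserves $\wedge^{\pm}M$, and since $\WW$ is totally trace-free and satisfies the first Bianchi identity, each restriction $\WW^{\pm}$ is a trace-free symmetric endomorphism of the three-dimensional space $\wedge^{\pm}M$. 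Let $\lambda_1\le\lambda_2\le\lambda_3$ and $\mu_1\le\mu_2\le\mu_3$ be the eigenvalues of $\WW^+$ and $\WW^-$, respectively, so that $\lambda_1+\lambda_2+\lambda_3=\mu_1+\mu_2+\mu_3=0$.

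The key step is to produce an oriented orthonormal basis $\{e_i\}_{1\le i\le 4}$ of $T_pM$ such that the standard associated forms
\[
 f_1^{\pm}=\tfrac1{\sqrt2}(e_1\wedge e_2\pm e_3\wedge e_4),\quad
 f_2^{\pm}=\tfrac1{\sqrt2}(e_1\wedge e_3\pm e_4\wedge e_2),\quad
 f_3^{\pm}=\tfrac1{\sqrt2}(e_1\wedge e_4\pm e_2\wedge e_3)
\]
are eigenvectors of $\WW^{\pm}$ with eigenvalues $\lambda_k$ and $\mu_k$. This is possible because the orthonormal bases $\{f_k^+\}$ of $\wedge^+M$ and $\{f_k^-\}$ of $\wedge^-M$ may be rotated \emph{independently} by elements of $SO(3)$ while still being induced by a single oriented orthonormal frame of $T_pM$; equivalently, the natural homomorphism $SO(4)\to SO(\wedge^+M)\times SO(\wedge^-M)$ is surjective. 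This is the only point that genuinely uses the low-dimensional (spin) structure, and it is the main obstacle to be careful about; it is classical, cf. Berger \cite{berger61} and Singer--Thorpe \cite{st69}.

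Granting the frame, everything else is bookkeeping. Writing $e_1\wedge e_2=\tfrac1{\sqrt2}(f_1^++f_1^-)$, $e_3\wedge e_4=\tfrac1{\sqrt2}(f_1^+-f_1^-)$, and the analogous expressions for the remaining pairs, and using $\WW(f_k^+,f_l^-)=0$ together with $\WW(f_k^+,f_l^+)=\lambda_k\delta_{kl}$ and $\WW(f_k^-,f_l^-)=\mu_k\delta_{kl}$, a direct computation shows that, in the ordered basis $(e_1\wedge e_2,\,e_1\wedge e_3,\,e_1\wedge e_4,\,e_3\wedge e_4,\,e_4\wedge e_2,\,e_2\wedge e_3)$, the matrix of $\WW$ has the form \eqref{abba} with $A=\diag\{a_1,a_2,a_3\}$, $B=\diag\{b_1,b_2,b_3\}$ and
\[
 a_k=\tfrac12(\lambda_k+\mu_k),\qquad b_k=\tfrac12(\lambda_k-\mu_k),\qquad k=1,2,3.
\]
In particular the off-diagonal entries of the $A$ and $B$ blocks vanish (because the cross terms $\WW(f_k^{\pm},f_l^{\pm})$ vanish for $k\ne l$), and the same computation yields the identifications $b_1=\WW_{1234}$, $b_2=\WW_{1342}$, $b_3=\WW_{1423}$ of item (4).

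It remains to check items (1)--(3), (5), (6), all elementary. Item (5) is immediate: $\sum_k a_k=\tfrac12\sum_k(\lambda_k+\mu_k)=0$ and likewise $\sum_k b_k=0$. For (1)--(3), note that every unit decomposable $2$-form $\omega=a\wedge b$ with $a\perp b$ unit splits as $\omega=\omega^++\omega^-$ with $|\omega^+|^2=|\omega^-|^2=\tfrac12$, whence $\WW(a,b,a,b)=\WW^+(\omega^+,\omega^+)+\WW^-(\omega^-,\omega^-)\ge\tfrac12(\lambda_1+\mu_1)=a_1$, with equality attained; dually $\WW(a,b,a,b)\le a_3$ with equality attained. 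Since $e_1\wedge e_2$ and $e_3\wedge e_4$ both decompose with $\pm$-components of norm $\tfrac1{\sqrt2}$ realizing the extreme eigenvectors, one gets $\WW(e_1,e_2,e_1,e_2)=\WW(e_3,e_4,e_3,e_4)=a_1$ and the corresponding statements for $a_2,a_3$. Finally, for (6), since $\lambda_2-\lambda_1\ge0$ and $\mu_2-\mu_1\ge0$,
\[
 |b_2-b_1|=\tfrac12\bigl|(\lambda_2-\lambda_1)-(\mu_2-\mu_1)\bigr|\le\tfrac12\bigl((\lambda_2-\lambda_1)+(\mu_2-\mu_1)\bigr)=a_2-a_1,
\]
and the inequalities $|b_3-b_1|\le a_3-a_1$ and $|b_3-b_2|\le a_3-a_2$ follow in exactly the same way.
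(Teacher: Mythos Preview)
Your argument is correct. Note, however, that the paper does not actually prove this proposition: it is stated as a classical result with references to Berger \cite{berger61} and Singer--Thorpe \cite{st69}, and is used as a black box. So there is no ``paper's own proof'' to compare against.

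That said, your approach is precisely the modern way this normal form is established, and it dovetails exactly with how the paper goes on to use the proposition: immediately after stating it, the paper writes down the bases $\mathbb{B}^{\pm}$ and reads off the eigenvalue relations $\lambda_k^{\pm}=a_k\pm b_k$, which are just your formulas $a_k=\tfrac12(\lambda_k+\mu_k)$, $b_k=\tfrac12(\lambda_k-\mu_k)$ in reverse. Your identification of the crucial step---surjectivity of $SO(4)\to SO(\wedge^+)\times SO(\wedge^-)$, which allows the eigenbases of $\WW^+$ and $\WW^-$ to be realized simultaneously by a single orthonormal frame of $T_pM$---is exactly right, and your derivations of (1)--(6) from the ordered eigenvalues are clean. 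One minor remark: the paper's phrase ``the corresponding basis $\{e_i\wedge e_j\}_{1\le i<j\le 4}$'' is slightly informal about the ordering; your explicit choice $(e_{12},e_{13},e_{14},e_{34},e_{42},e_{23})$ is the one that actually produces the block form \eqref{abba}, and is consistent with the paper's subsequent definition of $\mathbb{B}^{\pm}$.
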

Then, one can construct orthonormal bases for $\wedge^{\pm} M$ by, for $e_{ij}:=e_i\wedge e_j$,
\begin{align*}
\mathbb{B}^+ &=\frac{1}{\sqrt{2}}(e_{12}+e_{34}, e_{13}-e_{24}, e_{14}+e_{23}),\\
\mathbb{B}^- &=\frac{1}{\sqrt{2}}(e_{12}-e_{34}, e_{13}+e_{24}, e_{14}-e_{23}).
\end{align*}
As a result, eigenvalues of $\WW^{\pm}$ are ordered,
\begin{equation}
\label{eigenvalue}
\begin{cases}
\lambda_1^+=a_1+b_1 \leq \lambda_2=a_2+b_2 \leq \lambda_3^+=a_3+b_3, &\\
\lambda_1^-=a_1-b_1 \leq \lambda_2^-=a_2-b_2 \leq \lambda_3^-=a_3-b_3. &
\end{cases}
\end{equation}

The biorthogonal (sectional) curvature of a plane $P\in T_p(M)$ is defined as
\begin{equation*}
K^{\perp}(P)=\frac{K(P)+K(P^{\perp})}{2},
\end{equation*}
where $P^\perp$ is the orthogonal plane to $P$. If $P$ and $P^\perp$ are spanned by orthonomal bases $\{e_1, e_2\}$ and $\{e_3, e_4\}$ then
\begin{align}
K^\perp(P)&=\frac{1}{2}(\RR_{1212}+\RR_{3434})\nonumber\\
\label{orthogonalsec}
&=\WW_{1212}+\frac{\SS}{12}. 
\end{align}
Here we use equation $(\ref{curvdec})$ to simplify the computation. It is noted that, for $P_1, P_2, P_3$ planes spanned by $\{e_1, e_2\}, \{e_1, e_2\}, \{e_1, e_2\}$ respectively,  
\begin{equation}
\label{scalarandKperp}
\frac{\SS}{4}=\KK^\perp(P_1)+\KK^\perp(P_2)+\KK^\perp(P_3)
\end{equation}
\subsection{\textbf{Topology and Harmonic Forms}}
We will describe the topology of a closed connected four-manifold. Generally, for any manifold $M$ of dimension $n$, the $k$-th Betti number $b_k(M)$, intuitively the number of $k$-dimensional holes, is the rank of the $k$-th homology group. If $M$ is closed and oriented, by Poincare's duality,
\[ b_k(M)= b_{n-k}(M).\]
Thus, if $M$ is connected 
\[b_0(M)=b_n(M)=1.\]
Other topological invariants, subsequently, can be expressed in terms of these numbers. Notably, the Euler characteristic is given by
\[ \chi(M)=\sum_{i=0}^{\infty}(-1)^i b_i(M).\] 

Next, we restrict to dimension four. The Euler characteristic for a closed, connected, and oriented manifold $M$ is 
\[\chi(M)=2-2b_1+b_2.\]
\begin{remark}
When $M$ has a finite fundamental group, then $b_1=0$. 
\end{remark}
When $M$ is equipped with a Riemannian metric $g$, the Gauss-Bonnet-Chern formula states that 
\begin{equation}
\label{Euler}
8\pi^2 \chi(M) = \int_{M}(|\WW|^2-\frac{1}{2}|\Rc-\frac{\SS}{4}\id|^2+\frac{S^2}{24}) dv
\end{equation}

Furthermore, by De Rham's theorem, $b_k$ is also the dimension of the space of harmonic $k$-forms. The decomposition induced by the Hodge star operator translates into
\[b_2(M)=b_+(M) + b_-(M).\]
Here, $b_{\pm}$ are the dimension of the space of harmonic self-dual and anti-self-dual 2-forms, respectively. What is more, the difference between $b_{\pm}$ is also a topological invariant, called the signature. Analogous to the Euler characteristic, Hirzebruch also found a formula for the signature using curvature terms (cf. \cite{Besse} for more details)
\begin{equation}
\label{signature}
b_{+}(M)-b_-(M):=\tau(M)=\frac{1}{12\pi^2}\int_{M}(|\WW^{+}|^2-|\WW^{-}|^2)dv. 
\end{equation} 
\begin{definition}
A closed, oriented, connected, smooth manifold in dimension four is said to be definite if $b_{+}b_{-}=0$. It is said to have positive intersection form if it is definite and $b_2>0$.  
\end{definition}

Next, we recall a Bochner formula for harmonic 2-forms. In general, interchanging the order of derivative gives rise to curvature terms. Specifically, for any two-forms $\omega$, 
\[\Delta |\omega|^2=2\left\langle{\Delta \omega, w}\right\rangle+2|\nabla \omega|^2+2 \RR_2(\omega, \omega).
\]
Here, using equation (\ref{curvdec}),
\[
\RR_2= \Rc \circ g -2\RR=\frac{\SS}{4}g\circ g -2\WW-\frac{\SS}{12} g\circ g=\frac{\SS}{6}g\circ g -2\WW.
\]
It is noted that $\RR_2$ also goes by the name Weitzenbock operator. And a manifold is said to have non-negative isotropic curvature if $\RR_2\geq 0$. Since $\RR_2=\frac{\SS}{6}g\circ g- \WW$, with respect to $\wedge^2 =\wedge^+ \oplus \wedge^-$, 
\begin{equation*}
\RR_2=\left( \begin{array}{cc}
\frac{\SS}{3}\id-2\WW^+ & 0 \\
0 & \frac{\SS}{3}\id-\WW^-
\end{array} \right)
\end{equation*}
Thus, if $\lambda^{\pm}_1\leq \lambda^{\pm}_2\leq \lambda^{\pm}_3$ are eigenvalues of $\WW^{\pm}$, then,
\begin{equation}
\label{estimateR2} (\frac{\SS}{3}-2\lambda^{\pm}_1) |\omega_{\pm}|^2\geq \RR_2(\omega_{\pm}, \omega_{\pm}) \geq (\frac{\SS}{3}-2\lambda^{\pm}_3)|\omega_{\pm}|^2.
\end{equation}
If $\omega$ is harmonic then we have,
\begin{equation}
\label{bochnerhar}\Delta |\omega|^2=2|\nabla \omega|^2+2 \RR_2(\omega, \omega).
\end{equation} 
Furthermore, there is an improved Kato's inequality discovered by W. Seaman \cite{seaman93} for harmonic 2-forms:
\begin{equation}
\label{iKatohar}|\nabla \omega|^2 \geq \frac{3}{2}\nabla |\omega||^2.
\end{equation}

\subsection{\textbf{Examples}}
Here we describe the geometry and topology of some well-known simply connected 4-manifolds. As a consequence, they all have $b_1=0$. So all topological invariants discussed above are totally determine by the Euler characteristic and the Hirzebruch signature. 

First, the sphere has the following topological invariants: \[\chi=2 \text{~and~} \tau=0.\]
The curvature of the round metric $g_0$ on $\mathbb{S}^4$ is:
\begin{equation}
\RR=\left( \begin{array}{cc}
\frac{\SS}{12}\id &  \\
   & \frac{\SS}{12}\id
     \end{array} \right)
\end{equation}
Then, the real projective space $(\mathbb{RP}^4, g_0)$ is the quotient of $(\mathbb{S}^4,g_0)$ by the antipodal identification.

The complex projective space $\mathbb{CP}^2$ has the following topological invariants: 
\[\chi=3 \text{~and~} \tau=1.\]
With some orientation, the curvature of the Fubini-Study metric $g_{FS}$ on $\mathbb{CP}^2$ is:
\begin{equation}
\RR=\left( \begin{array}{cc}
\diag \{0,\ 0, \ \frac{\SS}{4}\}&  \\
   & \frac{\SS}{12}\id
     \end{array} \right).
\end{equation}
The self-dual part of Weyl tensor $\WW^+=\diag\{-\frac{\SS}{12}, \ -\frac{\SS}{12}, \ \frac{\SS}{6}\}$ and anti-self-dual part $\WW^{-}=0$.
 
The product of 2 spheres $\mathbb{S}^2\times \mathbb{S}^2$ has the following topological invariants: 
\[\chi=4 \text{~and~} \tau=0.\]
The curvature of the product metric on $\mathbb{S}^2\times \mathbb{S}^2$ is 
\begin{equation}
\RR=\left( \begin{array}{cc}
A &  0\\
0  & A
     \end{array} \right)
\end{equation} 
for $A=\diag\{0,\ 0,\ \frac{\SS}{4}\}$. The self-dual part and anti-self-dual part of the Weyl tensor are
$\WW^{\pm}=\diag\{-\frac{\SS}{12},\ -\frac{\SS}{12},\ \frac{\SS}{6}\}$. \\

\subsection{\textbf{Basic Estimates}}

We first relate eigenvalues of $\WW^{\pm}$ with the scalar curvature and the biorthogonal curvature. 
\begin{lemma}
	\label{estimatelambda}
	Let $(M,g)$ be a 4-manifold and $\delta(p)=\max_{P\in T_p(M)}\KK^\perp(P)$. Then, 
	\begin{align}
	\frac{2\SS}{3}-2\lambda_3^+-2\lambda_3^- &\geq \SS-4\delta,\\
	|\lambda_3^+-\lambda_3^-| &\leq 2(\delta-\frac{\SS}{12}),\\
	\frac{\SS}{3}-2\lambda_3^{\pm} &\geq \frac{2\SS}{3}-4\delta.
	\end{align}
	Furthermore, equality happens in the last two if and only if $|\WW^-||\WW^+|=0$.
\end{lemma}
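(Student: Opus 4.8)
The plan is to pass to Berger's normal form from Proposition~\ref{berger} and reduce the whole statement to the single elementary inequality $|b_3|\le a_3$; the three displayed estimates then follow by direct substitution, and the only point needing care is the equality discussion.

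Fix $p\in M$ and work in the orthonormal basis of Proposition~\ref{berger}, so that $\WW$ has the block form~\eqref{abba} with $A=\diag\{a_1,a_2,a_3\}$ and $B=\diag\{b_1,b_2,b_3\}$. By~\eqref{orthogonalsec}, for every plane $P\subset T_pM$ with orthonormal basis $\{u,v\}$ we have $\KK^{\perp}(P)=\WW(u,v,u,v)+\tfrac{\SS}{12}$, so maximizing over $P$ and using part~(2) of Proposition~\ref{berger} (namely $a_3=\max_{|u|=|v|=1,\ u\perp v}\WW(u,v,u,v)$) gives the identity $\delta=a_3+\tfrac{\SS}{12}$, that is $a_3=\delta-\tfrac{\SS}{12}$. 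By~\eqref{eigenvalue} we also have $\lambda_3^{+}=a_3+b_3$ and $\lambda_3^{-}=a_3-b_3$, hence $\lambda_3^{+}+\lambda_3^{-}=2a_3$ and $\lambda_3^{+}-\lambda_3^{-}=2b_3$. Substituting these, the first estimate becomes the \emph{identity} $\tfrac{2\SS}{3}-2\lambda_3^{+}-2\lambda_3^{-}=\tfrac{2\SS}{3}-4a_3=\SS-4\delta$; the second becomes $2|b_3|\le2a_3$; and the third, for $\lambda_3^{+}$ and $\lambda_3^{-}$ respectively, becomes $b_3\le a_3$ and $-b_3\le a_3$. Thus everything reduces to $|b_3|\le a_3$.

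To prove $|b_3|\le a_3$ I would combine part~(5) of Proposition~\ref{berger} (so $b_1+b_2=-b_3$ and $a_1+a_2=-a_3$) with part~(6) and the triangle inequality:
\[
3|b_3|=|2b_3-b_1-b_2|\le|b_3-b_1|+|b_3-b_2|\le(a_3-a_1)+(a_3-a_2)=3a_3,
\]
noting $a_3\ge0$ since $a_1\le a_2\le a_3$ sum to $0$. This simultaneously yields all three estimates.

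The equality statement is the only delicate part and is where I expect the main work. Equality in the second estimate --- equivalently, in at least one of the two instances of the third --- amounts to $|b_3|=a_3$, hence to equality everywhere in the chain above. If $\WW^{+}=0$ then $a_i+b_i=\lambda_i^{+}=0$ for all $i$, so $B=-A$ and $|b_3|=|a_3|=a_3$; symmetrically $\WW^{-}=0$ gives $B=A$ and $|b_3|=a_3$. Conversely, assume $|b_3|=a_3$. If $a_3=0$ then $a_1=a_2=0$, and part~(6) of Proposition~\ref{berger} forces $b_1=b_2=b_3$, hence $B=0$ (as $b_1+b_2+b_3=0$) and $\WW=0$. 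If $a_3>0$, take first $b_3\ge0$, so $b_3=a_3$: equality in the middle step $|2b_3-b_1-b_2|=|b_3-b_1|+|b_3-b_2|$ forces $b_3-b_1$ and $b_3-b_2$ to share a sign, which must be nonnegative since their sum $3b_3$ is positive; then equality in the outer steps gives $b_3-b_i=a_3-a_i$, whence $b_i=a_i$ for $i=1,2$, so $B=A$ and $\WW^{-}=0$. The case $b_3\le0$ is symmetric and gives $B=-A$, $\WW^{+}=0$. Hence equality holds exactly when $|\WW^{+}|\,|\WW^{-}|=0$.
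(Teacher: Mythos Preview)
Your proof is correct. Both you and the paper reduce to Berger's normal form and recognize that $\delta=a_3+\tfrac{\SS}{12}$, so the first estimate is in fact an identity; the content is the single inequality $|b_3|\le a_3$. The difference lies in how this is obtained. The paper argues that $\lambda_3^{+}$ and $\lambda_3^{-}$, being the top eigenvalues of trace-free symmetric operators, are each nonnegative, hence
\[
0\le \lambda_3^{+}\lambda_3^{-}=(a_3+b_3)(a_3-b_3)=a_3^{2}-b_3^{2},
\]
which gives $|b_3|\le a_3$ in one line and makes the equality case immediate: $\lambda_3^{+}\lambda_3^{-}=0$ forces one of $\lambda_3^{\pm}=0$, i.e.\ $\WW^{+}=0$ or $\WW^{-}=0$. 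Your route via part~(6) of Proposition~\ref{berger} and the triangle inequality is a perfectly valid, more combinatorial alternative; it has the virtue of using only the normal-form constraints, but the trade-off is a noticeably longer equality analysis compared with the paper's one-step observation.
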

\begin{proof}
	Using Berger's normal form as described in Prop. \ref{berger}, we have 
	\[ 2\lambda_3^+ + 2\lambda_3^-=4\WW_{1414}=4\WW_{2323}.\]
	On the other hand, by equation (\ref{orthogonalsec})
	\[\WW_{1414}=K^{\perp}(P)-\frac{\SS}{12},\]
	where $P$ is the tangent plane spanned by $\{e_1, e_4\}$. 
	Thus, 
	\begin{align*}
	\frac{2\SS}{3}-2\lambda_3^+-2\lambda_3^-&= \SS-4K^\perp(P),\\
	&\geq \SS-4\delta.
	\end{align*}
	Next, we observe
	\[
	|\lambda_3^+-\lambda_3^-| = 2|\WW_{1423}|.\]
	Furthermore, 
	\begin{align*}
	0\leq \lambda_3^+\lambda_3^- &=\frac{1}{4}(\WW_{1414}+\WW_{2323})^2-|\WW_{1423}|^2\\
	&=\frac{1}{4}(2\KK^\perp(P)-\frac{\SS}{6})^2-|\WW_{1423}|^2\\
	&\leq \frac{1}{4}(2\delta-\frac{\SS}{6})^2-|\WW_{1423}|^2.
	\end{align*}
	Therefore, the second inequality follows. Also, equality happens if and only if $0=\lambda_3^+\lambda_3^-$. Since each is the largest eigenvalue, $|\WW^-||\WW^+|=0$.

	Finally, we compute,
	\begin{align*}
	\frac{\SS}{3}-2\lambda_3^+ &=\frac{\SS}{3}-(2\KK^\perp(P)-\frac{\SS}{6}+2\WW_{1423})\\
	&=\frac{\SS}{2}-2\KK^\perp(P)-2\WW_{1423}\\
	&\geq \frac{\SS}{2}-2\KK^\perp(P)-2|\WW_{1423}|\\
	&\geq \frac{\SS}{2}-2\delta-2(\delta-\frac{\SS}{12})=\frac{2\SS}{3}-4\delta.
	\end{align*}
\end{proof}

The next result estimates $\det{\WW^\pm}$ and will be used in Section \ref{harmonicweyl}. 
\begin{lemma} 
	\label{estimatedet}
	Let $(M,g)$ be a 4-manifold then
	\begin{equation}
	36 \det{\WW^\pm}\leq 6\lambda_3^{\pm}|\WW^\pm|^2.
	\end{equation}
\end{lemma}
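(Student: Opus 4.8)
The plan is to reduce the stated matrix inequality to an elementary algebraic statement about the eigenvalues of $\WW^\pm$; I will do the case of $\WW^+$, the case of $\WW^-$ being identical. Since $\WW^\pm$ is a symmetric endomorphism of the three-dimensional bundle $\wedge^\pm M$, it is diagonalizable with real eigenvalues $\lambda_1^\pm \le \lambda_2^\pm \le \lambda_3^\pm$, and by item $(5)$ of Proposition \ref{berger} (equivalently, because $\WW$ is trace-free and $\lambda_i^\pm = a_i \pm b_i$) these satisfy $\lambda_1^\pm + \lambda_2^\pm + \lambda_3^\pm = 0$. In these terms $\det \WW^\pm = \lambda_1^\pm\lambda_2^\pm\lambda_3^\pm$ and $|\WW^\pm|^2 = (\lambda_1^\pm)^2 + (\lambda_2^\pm)^2 + (\lambda_3^\pm)^2$, so after dividing by $6$ the claimed inequality is equivalent to
\[ 6\,\lambda_1^\pm\lambda_2^\pm\lambda_3^\pm \;\le\; \lambda_3^\pm\big((\lambda_1^\pm)^2+(\lambda_2^\pm)^2+(\lambda_3^\pm)^2\big). \]

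Next I would observe that, since the $\lambda_i^\pm$ have zero mean, the largest satisfies $\lambda_3^\pm \ge 0$. If $\lambda_3^\pm = 0$ then all three eigenvalues vanish and both sides are zero; so we may assume $\lambda_3^\pm > 0$ and divide through by it, leaving $6\,\lambda_1^\pm\lambda_2^\pm \le (\lambda_1^\pm)^2+(\lambda_2^\pm)^2+(\lambda_3^\pm)^2$. Substituting $\lambda_3^\pm = -(\lambda_1^\pm+\lambda_2^\pm)$ and expanding, this becomes $2\,\lambda_1^\pm\lambda_2^\pm \le (\lambda_1^\pm)^2 + (\lambda_2^\pm)^2$, i.e. $(\lambda_1^\pm - \lambda_2^\pm)^2 \ge 0$, which is always true.

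Since each step is an equivalence down to a trivially true inequality, there is no real obstacle here; the only points needing a word of justification are the spectral description of $\WW^\pm$ on the rank-three bundle $\wedge^\pm M$ and the vanishing of its trace, both of which are already recorded in Proposition \ref{berger} and the Hodge decomposition of $\RR$. It is worth noting for later use that equality forces $\lambda_1^\pm = \lambda_2^\pm$, so that $\WW^\pm$ has a repeated lowest eigenvalue (in particular equality holds whenever $\WW^\pm$ vanishes).
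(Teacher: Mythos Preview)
Your proof is correct and follows essentially the same route as the paper: both arguments use the trace-free condition $\lambda_1^\pm+\lambda_2^\pm+\lambda_3^\pm=0$ to reduce the inequality to $(\lambda_1^\pm-\lambda_2^\pm)^2\ge 0$. The only cosmetic difference is that the paper directly factors the difference as $6\lambda_3^\pm|\WW^\pm|^2-36\det\WW^\pm=12\lambda_3^\pm(\lambda_1^\pm-\lambda_2^\pm)^2$, whereas you divide through by $\lambda_3^\pm$ first; the content and the equality case are identical.
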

\begin{proof}
	It suffices to prove it for $\WW^+$ as the other case is similar. Using the normal form as described in Prop \ref{berger}, we compute
	\begin{align*}
	6\lambda_3^{+}|\WW^+|^2-36 \det{\WW^+}&=12\lambda^+_3\Big((\lambda^+_3)^2+(\lambda^+_2)^2+\lambda_3^+\lambda_2^+\Big)-36\lambda^+_1\lambda^+_2\lambda^+_3\\
	&=12(\lambda^+_3)^3-48\lambda^+_1\lambda^+_2\lambda^+_3\\
	&=12\lambda^+_3(\lambda^+_1-\lambda^+_2)^2\\
	&\geq 0.
	\end{align*}
	Here, we repeatedly utilize the fact that $\lambda^+_1+\lambda^+_2+\lambda^+_3=0$. 
\end{proof}

Finally, we have the following lemma which relates all curvature assumptions discussed in the Introduction. 
\begin{lemma}
	\label{relatedconditions}
	Let $(M,g)$ be a closed four-dimensional manifold with positive scalar curvature and let $\lambda_1$ be its first eigenvalue of the Laplacian on functions. Suppose one of the following conditions holds:
	\begin{enumerate}
		\item $\KK^\perp\geq \frac{\SS^2}{24(\SS+3\lambda)};$
		\item $\KK^\perp_{min}\geq \frac{\SS}{2(2\SS+9\lambda_1)} \KK^\perp_{max}$;
		\item There exists $k>0$, such that $\Rc\geq k$ and $\KK^\perp \leq \frac{5}{6}k$; 
	\end{enumerate}
	Then \[\KK^\perp\leq \frac{\SS(2\SS+9\lambda_1)}{12(\SS+3\lambda_1)}.\]
\end{lemma}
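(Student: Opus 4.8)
The plan is to show that each of the three hypotheses $(1)$, $(2)$, $(3)$ implies the target bound $\KK^\perp \leq \frac{\SS(2\SS+9\lambda_1)}{12(\SS+3\lambda_1)}$ by a direct pointwise algebraic argument, using only the identity \eqref{scalarandKperp}, namely $\KK^\perp(P_1)+\KK^\perp(P_2)+\KK^\perp(P_3)=\frac{\SS}{4}$ for the three mutually orthogonal coordinate planes, together with positivity of $\SS$ and $\lambda_1$. First I would rewrite the target inequality in a form that makes the comparison transparent: set $\delta = \KK^\perp_{max}$, and observe that $\frac{\SS(2\SS+9\lambda_1)}{12(\SS+3\lambda_1)}$ is an increasing function of $\lambda_1$ interpolating between $\frac{\SS}{6}$ (at $\lambda_1=0$) and $\frac{3\SS}{4}$ (as $\lambda_1\to\infty$); so the bound is always at least $\frac{\SS}{6}$, and in fact what must be controlled is how large $\delta$ can be relative to $\SS$ and $\lambda_1$.

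For part $(3)$, the argument is most elementary: since $\Rc\geq k$ we have $\SS\geq 4k$, hence $k\leq \frac{\SS}{4}$, and the hypothesis $\KK^\perp\leq \frac{5}{6}k$ gives $\delta \leq \frac{5}{6}k$. I then want to compare $\frac{5}{6}k$ with $\frac{\SS(2\SS+9\lambda_1)}{12(\SS+3\lambda_1)}$; using $k \le \SS/4$ it suffices to check $\frac{5\SS}{24} \leq \frac{\SS(2\SS+9\lambda_1)}{12(\SS+3\lambda_1)}$, which after clearing denominators reduces to $5(\SS+3\lambda_1)\leq 2(2\SS+9\lambda_1)$, i.e. $5\SS+15\lambda_1 \le 4\SS + 18\lambda_1$, i.e. $\SS \le 3\lambda_1$. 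This is \emph{not} automatically true, so here I would instead invoke the Faber–Krahn / Lichnerowicz-type estimate available under $\Rc\geq k$: namely, $\Rc\geq k$ implies $\lambda_1 \geq \frac{4k}{3}$ (the Lichnerowicz estimate in dimension $4$), whence $3\lambda_1 \geq 4k \ge \tfrac{... }$ — more carefully, one uses $\lambda_1\ge \frac{n}{n-1}k = \frac43 k$ and $\SS\ge 4k$ together to land the inequality. I would carry out this bookkeeping carefully; the cleanest route is to combine $\delta\le\frac56 k$, $\SS\ge 4k$, and $\lambda_1\ge\frac43 k$ and verify the target by homogeneity in $k$.

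For parts $(1)$ and $(2)$, I would use \eqref{scalarandKperp} to bound the spread of the biorthogonal curvatures. From $\KK^\perp(P_1)+\KK^\perp(P_2)+\KK^\perp(P_3)=\frac{\SS}{4}$, the minimum satisfies $\KK^\perp_{min}\leq \frac{\SS}{12}$ and the maximum satisfies $\delta=\KK^\perp_{max}\geq \frac{\SS}{12}$; moreover $\delta \leq \frac{\SS}{4}-2\KK^\perp_{min}$ (since the other two are each at least $\KK^\perp_{min}$). For $(1)$, the hypothesis $\KK^\perp_{min}\geq \frac{\SS^2}{24(\SS+3\lambda_1)}$ then yields $\delta \leq \frac{\SS}{4} - \frac{\SS^2}{12(\SS+3\lambda_1)} = \frac{\SS(3\SS+9\lambda_1) - \SS^2 \cdot \tfrac{...}{}}{}$ — concretely $\frac{\SS}{4}-\frac{\SS^2}{12(\SS+3\lambda_1)} = \frac{3\SS(\SS+3\lambda_1)-\SS^2}{12(\SS+3\lambda_1)} = \frac{\SS(2\SS+9\lambda_1)}{12(\SS+3\lambda_1)}$, which is exactly the target. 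For $(2)$, writing $\KK^\perp_{min}\geq \frac{\SS}{2(2\SS+9\lambda_1)}\,\delta$ and again $\delta \le \frac{\SS}{4}-2\KK^\perp_{min}$, substituting gives $\delta \leq \frac{\SS}{4} - \frac{\SS}{2\SS+9\lambda_1}\delta$, so $\delta\bigl(1+\frac{\SS}{2\SS+9\lambda_1}\bigr)\leq \frac{\SS}{4}$, i.e. $\delta\cdot\frac{3\SS+9\lambda_1}{2\SS+9\lambda_1}\leq \frac{\SS}{4}$, hence $\delta \leq \frac{\SS(2\SS+9\lambda_1)}{4\cdot 3(\SS+3\lambda_1)} = \frac{\SS(2\SS+9\lambda_1)}{12(\SS+3\lambda_1)}$, again the target exactly.

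The main obstacle is part $(3)$: unlike $(1)$ and $(2)$, which collapse to the target by a purely algebraic identity stemming from \eqref{scalarandKperp}, part $(3)$ genuinely needs an eigenvalue lower bound coming from the Ricci hypothesis (Lichnerowicz), and I must make sure the chain of inequalities $\delta\le\frac56 k$, $\SS\ge 4k$, $\lambda_1\ge\frac43 k$ actually closes the gap — the delicate point being that all three are needed and the estimate is sharp, so there is no slack to waste. Everything else is routine clearing of denominators.
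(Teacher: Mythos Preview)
Your proposal is correct and follows essentially the same route as the paper. Parts (1) and (2) are identical to the paper's argument: both use the identity $\KK^\perp(P_1)+\KK^\perp(P_2)+\KK^\perp(P_3)=\frac{\SS}{4}$ to bound $\delta=\KK^\perp_{\max}$ by $\frac{\SS}{4}-2\KK^\perp_{\min}$ and then substitute the respective hypothesis, landing exactly on the target bound.

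For part (3), your initial attempt via $k\leq \SS/4$ correctly leads to the sufficient condition $\SS\leq 3\lambda_1$, which you rightly note is not automatic; this detour is absent from the paper but harmless. Your recovery is the paper's own argument: invoke Lichnerowicz ($\lambda_1\geq \frac{4}{3}k$ in dimension four) together with $\SS\geq 4k$, and then use that $f(x,y)=\frac{x(2x+9y)}{12(x+3y)}$ is increasing in both variables on $x,y>0$, so that $f(\SS,\lambda_1)\geq f(4k,\tfrac{4}{3}k)=\frac{5}{6}k$. Your phrase ``verify the target by homogeneity in $k$'' amounts to exactly this monotonicity step after scaling out $k$. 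You have not written the monotonicity check out explicitly, but once you compute $\partial_x f,\partial_y f>0$ (or equivalently minimize over $s=\SS/k\geq 4$, $\ell=\lambda_1/k\geq \tfrac{4}{3}$), the inequality closes with equality at the endpoint, confirming your remark that there is no slack.
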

\begin{proof}
For an orthonormal basis $\{e_1, e_2, e_3, e_4\}$, let $P_1, P_2, P_3$ be planes spanned by $\{e_1, e_2\}$, $\{e_1, e_3\}$, $\{e_1, e_4\}$ respectively. Recall that 
\[\frac{\SS}{4}=\KK^\perp(P_1)+\KK^\perp(P_2)+\KK^\perp(P_3).\]
So if $\KK^\perp\geq \frac{\SS^2}{24(\SS+3\lambda)}$ then 
\begin{align*}
\KK^\perp(P_3) &\leq \frac{\SS}{4}-2 \frac{\SS^2}{24(\SS+3\lambda)} \\
&\leq \frac{\SS(2\SS+9\lambda_1)}{12(\SS+3\lambda_1)}.
\end{align*}
Thus $(1)$ implies the desired conclusion. Similarly, if $(2)$ holds then
\begin{align*}
\frac{\SS}{4} &\geq \KK^\perp(P_3)+2\frac{\SS}{2(2\SS+9\lambda_1)} \KK^\perp(P_3),\\
\KK^\perp(P_3) &\leq \frac{\SS(2\SS+9\lambda_1)}{12(\SS+3\lambda_1)}.
\end{align*}
Finally, if $\Rc\geq k>0$ then $\SS\geq 4k$ and, by Lichnerowicz's theorem \cite{lich58}, $\lambda\geq \frac{4}{3}k$. Furthermore, it is noted that the function $f(x,y)=\frac{x(2x+9y)}{12(x+3y)}$, defined on $x,y>0$, is increasing on both $x$ and $y$. Thus, 
\begin{align*}
\frac{\SS(2\SS+9\lambda_1)}{12(\SS+3\lambda_1)} &\geq \frac{4k(8k+12k)}{12(4k+4k)}\\
&\geq \frac{5}{6}k. 
\end{align*}  
\end{proof}
\section{Harmonic Two-Forms}
\label{harmonic2f}
In this section, we study harmonic 2-forms and prove Theorem \ref{definitetheorem} and Corollary \ref{S2timeS2}. First, we deduce a general integral inequality relating the norms of a harmonic self-dual and anto-self-dual two-forms with the first eigenvalue of the Laplacian.

\begin{prop}
\label{integralestimate}
Let $(M,g)$ be a closed four-dimensional manifold and $\lambda_1$ be its first eigenvalue of the Laplacian on functions. Suppose there are harmonic self-dual and anti-self-dual 2 forms $\omega_{\pm}$ such that 
\[ \int_{M} |\omega_{+}|^{\alpha}= t \int_{M} |\omega_{-}|^{\alpha}.\]
Then we have the following inequality,
\begin{align*}
0\geq \int_M & 2\alpha\Big(|\omega_+|^{2(\alpha-1)}\RR_2(\omega_+, \omega_+)+t^2|\omega_-|^{2(\alpha-1)}\RR_2(\omega_-, \omega_-)\Big)\\
&+\frac{\lambda_1(4\alpha-1)}{2\alpha}\Big(|\omega_+|^\alpha-t|\omega_-|^\alpha\Big)^2.
\end{align*}
\end{prop}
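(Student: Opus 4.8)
The plan is to run the Bochner argument sketched in the introduction. Write $\phi_\pm:=|\omega_\pm|^\alpha$ and $u_\pm:=|\omega_\pm|$, and assume both forms are nontrivial (otherwise both vanish identically and there is nothing to prove), so that $t>0$. By hypothesis $f:=\phi_+-t\phi_-$ satisfies $\int_M f=0$, hence the variational characterization of the first eigenvalue gives $\lambda_1\int_M f^2\le\int_M|\nabla f|^2$. Moreover, pointwise,
\[
|\nabla f|^2=|\nabla\phi_+|^2-2t\langle\nabla\phi_+,\nabla\phi_-\rangle+t^2|\nabla\phi_-|^2\ \le\ 2|\nabla\phi_+|^2+2t^2|\nabla\phi_-|^2,
\]
using Cauchy--Schwarz together with $2t|\nabla\phi_+||\nabla\phi_-|\le|\nabla\phi_+|^2+t^2|\nabla\phi_-|^2$, so that $\tfrac{\lambda_1}{2}\int_M f^2\le\int_M|\nabla\phi_+|^2+t^2\int_M|\nabla\phi_-|^2$.

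Next I would convert $\int_M|\nabla\phi_\pm|^2$ into curvature integrals. Fix a nontrivial harmonic $2$-form $\omega$, set $u=|\omega|$, $\phi=u^\alpha$, and work on $\{u>0\}$. Combining $\Delta(u^2)=2u\,\Delta u+2|\nabla u|^2$ with $\Delta|\omega|^2=2|\nabla\omega|^2+2\RR_2(\omega,\omega)$ from $(\ref{bochnerhar})$ and then invoking the improved Kato inequality $(\ref{iKatohar})$ gives $u\,\Delta u\ge\tfrac12|\nabla u|^2+\RR_2(\omega,\omega)$. Multiplying this by $\alpha u^{2\alpha-2}\ge0$, inserting into $\Delta\phi=\alpha u^{\alpha-1}\Delta u+\alpha(\alpha-1)u^{\alpha-2}|\nabla u|^2$, and using $|\nabla\phi|^2=\alpha^2u^{2\alpha-2}|\nabla u|^2$, one obtains the pointwise inequality $\phi\,\Delta\phi\ge\frac{2\alpha-1}{2\alpha}|\nabla\phi|^2+\alpha u^{2\alpha-2}\RR_2(\omega,\omega)$. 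Integrating over $M$, using $\int_M\phi\,\Delta\phi=-\int_M|\nabla\phi|^2$, and rearranging (here $4\alpha-1>0$ is used) yields
\[
2\alpha\int_M|\omega|^{2(\alpha-1)}\RR_2(\omega,\omega)\ \le\ -\,\frac{4\alpha-1}{\alpha}\int_M|\nabla\phi|^2 .
\]

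To finish, apply this last inequality with $\omega=\omega_+$ and, separately, with $\omega=\omega_-$ after multiplying through by $t^2\ge0$; then add these two and also add $\tfrac{4\alpha-1}{\alpha}$ times the bound $\tfrac{\lambda_1}{2}\int_M f^2\le\int_M|\nabla\phi_+|^2+t^2\int_M|\nabla\phi_-|^2$ from the first paragraph. The gradient integrals cancel exactly, producing
\[
0\ \ge\ \int_M 2\alpha\Big(|\omega_+|^{2(\alpha-1)}\RR_2(\omega_+,\omega_+)+t^2|\omega_-|^{2(\alpha-1)}\RR_2(\omega_-,\omega_-)\Big)+\frac{\lambda_1(4\alpha-1)}{2\alpha}\big(|\omega_+|^\alpha-t|\omega_-|^\alpha\big)^2,
\]
which is the claimed estimate.

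The main obstacle is not any of these computations but the regularity of $\phi=|\omega|^\alpha$, which need not be $C^1$ across the zero set of $\omega$, so the pointwise differential inequality for $\phi\,\Delta\phi$ and the integration by parts must be justified there. The standard device is to replace $|\omega|$ by $(|\omega|^2+\varepsilon^2)^{1/2}$, carry out the same manipulations with $O(\varepsilon)$ error terms, and let $\varepsilon\to0$; alternatively one notes that $|\omega|^2$ satisfies $\Delta|\omega|^2\ge -C|\omega|^2$, hence vanishes to finite order, so its zero set is negligible and the almost-everywhere inequalities suffice. (Throughout one uses $\alpha>0$ and $4\alpha-1>0$, the range relevant in the applications.)
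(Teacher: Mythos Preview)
Your argument is correct and follows essentially the same route as the paper: Bochner formula (\ref{bochnerhar}), improved Kato (\ref{iKatohar}), the Cauchy--Schwarz bound $|\nabla\phi_+|^2+t^2|\nabla\phi_-|^2\ge\tfrac12|\nabla(\phi_+-t\phi_-)|^2$, and the variational characterization of $\lambda_1$ applied to the zero-mean function $\phi_+-t\phi_-$. The only cosmetic difference is that you package the computation via $\phi\,\Delta\phi$ and $\int\phi\,\Delta\phi=-\int|\nabla\phi|^2$, whereas the paper integrates $\Delta|\omega|^{2\alpha}$ directly; your handling of the zero set via $(|\omega|^2+\varepsilon^2)^{1/2}$ is exactly the regularization the paper invokes as well.
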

\begin{proof}
To make the calculation clean, we'll assume that $|\omega_{\pm}|>0$ (if $|\omega_{\pm}|=0$ at some points, replace $|\omega_{\pm}|^{2\alpha}$ by $|\omega_{\pm}|^{2\alpha}+\epsilon$ and let $\epsilon\rightarrow 0$; see \cite{caotran4} for details). Using (\ref{bochnerhar}), we compute, for any harmonic 2-forms $\omega$, 
\begin{align*}
\Delta |\omega|^{2\alpha} &= \alpha(\alpha-1)|\omega|^{2(\alpha-2)}|\nabla |\omega|^2|^2+\alpha|\omega|^{2(\alpha-1)}\Delta |\omega|^2,\\
&= |\omega|^{2(\alpha-1)}2\alpha\Big((|\nabla \omega|^2+\RR_2(\omega, \omega))+2(\alpha-1)|\nabla |\omega||^2\Big).
\end{align*}
Thus, for harmonic self-dual and anti-self-dual forms $\omega_{\pm}$,
\begin{align*}
\Delta(|\omega_{+}|^{2\alpha}+ t^2\omega_{-}|^{2\alpha}) =&
|\omega_+|^{2(\alpha-1)}2\alpha\Big((|\nabla \omega_+|^2+\RR_2(\omega_+, \omega_+))+2(\alpha-1)|\nabla |\omega_+||^2\Big)\\
+t^2& |\omega_-|^{2(\alpha-1)}2\alpha\Big((|\nabla \omega_-|^2+\RR_2(\omega_-, \omega_-))+2(\alpha-1)|\nabla |\omega_-||^2\Big).
\end{align*}
Using the improved Kato's inequality (\ref{iKatohar}) and integrating over the manifold yield,
\begin{align*}
0\geq \int_M &|\omega_+|^{2(\alpha-1)}\alpha\Big( (4\alpha-1)|\nabla |\omega_+||^2+2 \RR_2(\omega_+, \omega_+)\Big)\\
+t^2 & |\omega_-|^{2(\alpha-1)}\alpha\Big( (4\alpha-1) |\nabla |\omega_-||^2+2 \RR_2(\omega_-, \omega_-)\Big),\\
0\geq \int_M & 2\alpha\Big(|\omega_+|^{2(\alpha-1)}\RR_2(\omega_+, \omega_+)+t^2|\omega_-|^{2(\alpha-1)}\RR_2(\omega_-, \omega_-)\Big)\\
&+\frac{4\alpha-1}{\alpha}\Big(|\nabla |\omega_+|^\alpha|^2+t^2|\nabla |\omega_-|^\alpha|^2\Big).
\end{align*}
By the variation characterization of $\lambda_1$, 
\begin{align*}
0\geq \int_M & 2\alpha\Big(|\omega_+|^{2(\alpha-1)}\RR_2(\omega_+, \omega_+)+t^2|\omega_-|^{2(\alpha-1)}\RR_2(\omega_-, \omega_-)\Big)\\
&+\frac{4\alpha-1}{2\alpha}\Big(\nabla (|\omega_+|^\alpha-t|\omega_-|^\alpha)\Big)^2,\\
\geq \int_M & 2\alpha\Big(|\omega_+|^{2(\alpha-1)}\RR_2(\omega_+, \omega_+)+t^2|\omega_-|^{2(\alpha-1)}\RR_2(\omega_-, \omega_-)\Big)\\
&+\frac{\lambda_1(4\alpha-1)}{2\alpha}\Big(|\omega_+|^\alpha-t|\omega_-|^\alpha\Big)^2.
\end{align*}

By rearranging the terms we arrive at

\begin{align*} 
0\geq \int_M t^2& |\omega_-|^{2(\alpha-1)}\Big(2\alpha\RR_2(\omega_-, \omega_-)+ \frac{\lambda_1(4\alpha-1)}{2\alpha}|\omega_-|^{2}\Big)\\
+&|\omega_+|^{2(\alpha-1)}\Big(2\alpha\RR_2(\omega_+, \omega_+)+ \frac{\lambda_1(4\alpha-1)}{2\alpha}|\omega_+|^{2}\Big)\\
-&2t \frac{\lambda_1(4\alpha-1)}{2\alpha}|\omega_-|^{\alpha} |\omega_+|^{\alpha}.
\end{align*}

\end{proof}

We are now ready to prove the main theorem of this section. 
\begin{theorem}
\label{definiteupper}
Let $(M,g)$ be a closed four-dimensional manifold with positive scalar curvature and let $\lambda_1>0$ be its first eigenvalue of the Laplacian on functions. Suppose that \[\KK^\perp\leq \frac{\SS(2\SS+9\lambda_1)}{12(\SS+3\lambda_1)}.\] Then $M$ has definite intersection form.
\end{theorem}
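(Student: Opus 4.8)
The plan is to argue by contradiction: suppose $M$ has indefinite intersection form, so $b_+(M)>0$ and $b_-(M)>0$, and hence there exist nonzero harmonic self-dual and anti-self-dual two-forms $\omega_+$ and $\omega_-$. After scaling $\omega_-$ if necessary, we may normalize so that $\int_M|\omega_+|^\alpha=\int_M|\omega_-|^\alpha$, i.e., take $t=1$ in Proposition \ref{integralestimate}. This gives
\[
0\geq \int_M 2\alpha\Big(|\omega_+|^{2(\alpha-1)}\RR_2(\omega_+,\omega_+)+|\omega_-|^{2(\alpha-1)}\RR_2(\omega_-,\omega_-)\Big)+\frac{\lambda_1(4\alpha-1)}{2\alpha}\Big(|\omega_+|^\alpha-|\omega_-|^\alpha\Big)^2.
\]
The last term is nonnegative, so the goal is to show that under the curvature hypothesis the zero-order terms $\RR_2(\omega_\pm,\omega_\pm)$ are strictly positive (pointwise, or at least in integrated form after absorbing the eigenvalue term), which forces $\omega_+\equiv 0$ or $\omega_-\equiv 0$, a contradiction.

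The key estimates come from Lemma \ref{estimatelambda} together with (\ref{estimateR2}). Writing $\delta(p)=\max_P\KK^\perp(P)$, we have $\RR_2(\omega_\pm,\omega_\pm)\geq(\frac{\SS}{3}-2\lambda_3^\pm)|\omega_\pm|^2$, and Lemma \ref{estimatelambda} gives $\frac{\SS}{3}-2\lambda_3^\pm\geq \frac{2\SS}{3}-4\delta$. Adding the self-dual and anti-self-dual contributions and using the sharper bound $\frac{2\SS}{3}-2\lambda_3^+-2\lambda_3^-\geq \SS-4\delta$, I expect the integrand to be bounded below by something of the shape
\[
\Big(\tfrac{2\SS}{3}-4\delta\Big)\big(|\omega_+|^{2\alpha}+|\omega_-|^{2\alpha}\big)+c\,\lambda_1\big(|\omega_+|^\alpha-|\omega_-|^\alpha\big)^2
\]
up to the constant factors $2\alpha$ and $\frac{4\alpha-1}{2\alpha}$. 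The curvature assumption $\KK^\perp\leq\frac{\SS(2\SS+9\lambda_1)}{12(\SS+3\lambda_1)}$ means $\delta\leq\frac{\SS(2\SS+9\lambda_1)}{12(\SS+3\lambda_1)}$, so $\frac{2\SS}{3}-4\delta\geq\frac{2\SS}{3}-\frac{\SS(2\SS+9\lambda_1)}{3(\SS+3\lambda_1)}=\frac{\SS\lambda_1}{\SS+3\lambda_1}$, which is strictly positive; the point of the precise pinching constant is that this lower bound is exactly what is needed to combine with the $\lambda_1$-term via the elementary inequality $a^2+b^2\geq \frac12(a-b)^2$ (or a weighted version) to conclude that the whole integrand is $\geq 0$ and vanishes only when both forms vanish. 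The free parameter $\alpha$ should be chosen to optimize the interaction between the coefficient $2\alpha$ of the curvature term and the coefficient $\frac{\lambda_1(4\alpha-1)}{2\alpha}$ of the gradient-eigenvalue term; I would expect a specific value such as $\alpha=1$ or another small rational to make the algebra close.

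The main obstacle is the bookkeeping: one must verify that, after plugging in the lower bounds for $\RR_2(\omega_\pm,\omega_\pm)$, expanding the square $(|\omega_+|^\alpha-|\omega_-|^\alpha)^2$, and using the normalization, the resulting pointwise quadratic form in $(|\omega_+|^\alpha,|\omega_-|^\alpha)$ is positive semidefinite with the correct discriminant — and that the threshold value of $\delta$ at which positivity fails is precisely $\frac{\SS(2\SS+9\lambda_1)}{12(\SS+3\lambda_1)}$. This is where the seemingly mysterious pinching constant is reverse-engineered, and care is needed because $\SS$ and $\delta$ vary over $M$ while $\lambda_1$ is a global constant, so the argument is genuinely integral rather than pointwise; one should check that the coefficient $\frac{\SS\lambda_1}{\SS+3\lambda_1}$ (a function on $M$) still dominates in the integrated inequality. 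Once strict positivity is established wherever $\omega_\pm\neq 0$, unique continuation for harmonic forms gives $\omega_+\equiv 0$ or $\omega_-\equiv 0$, contradicting $b_+b_->0$; hence $b_+b_-=0$ and $M$ is definite.
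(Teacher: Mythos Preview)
Your overall architecture is right (contradiction, Proposition \ref{integralestimate}, Lemma \ref{estimatelambda}, pointwise quadratic positivity), but the computation you sketch contains a sign error that makes the whole scheme collapse, and you are missing the one estimate from Lemma \ref{estimatelambda} that actually rescues it.

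First the sign. At the threshold $\delta=\dfrac{\SS(2\SS+9\lambda_1)}{12(\SS+3\lambda_1)}$ one has
\[
\frac{2\SS}{3}-4\delta=\frac{2\SS(\SS+3\lambda_1)-\SS(2\SS+9\lambda_1)}{3(\SS+3\lambda_1)}=-\,\frac{\SS\lambda_1}{\SS+3\lambda_1}<0,
\]
not $+\frac{\SS\lambda_1}{\SS+3\lambda_1}$. So the individual lower bound $\frac{\SS}{3}-2\lambda_3^\pm\geq \frac{2\SS}{3}-4\delta$ gives a \emph{negative} number, and your proposed minorant $(\frac{2\SS}{3}-4\delta)(|\omega_+|^{2\alpha}+|\omega_-|^{2\alpha})+c\lambda_1(|\omega_+|^\alpha-|\omega_-|^\alpha)^2$ is not nonnegative: take $|\omega_+|^\alpha=|\omega_-|^\alpha$ to kill the second term. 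The ``sharper'' sum bound $\frac{2\SS}{3}-2\lambda_3^+-2\lambda_3^-\geq \SS-4\delta$ cannot be fed into the integrand directly either, because the two curvature terms carry the distinct weights $|\omega_+|^{2\alpha}$ and $|\omega_-|^{2\alpha}$.

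What the paper actually does is view the integrand as a quadratic in $t$ (equivalently, as a quadratic form in $(|\omega_+|^\alpha,|\omega_-|^\alpha)$) and control its discriminant. Writing $a=\frac{\SS}{3}-2\lambda_3^++\lambda_1$ and $b=\frac{\SS}{3}-2\lambda_3^-+\lambda_1$, the discriminant is
\[
\lambda_1^2-ab=\bigl|\lambda_3^+-\lambda_3^-\bigr|^2-\Bigl(\tfrac{\SS}{3}-\lambda_3^+-\lambda_3^-\Bigr)^2-\lambda_1\Bigl(\tfrac{2\SS}{3}-2\lambda_3^+-2\lambda_3^-\Bigr),
\]
and here the \emph{second} inequality of Lemma \ref{estimatelambda}, namely $|\lambda_3^+-\lambda_3^-|\leq 2(\delta-\frac{\SS}{12})$, is indispensable. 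You never invoke it, and without it the discriminant cannot be shown $\leq 0$. The specific choice $\alpha=\tfrac12$ (which maximizes $(4\alpha-1)/\alpha^2$, producing the clean coefficient $\lambda_1$) is also part of how the constants close up; $\alpha=1$ does not work.

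Finally, even once $D\leq 0$ is established, one does not get strict positivity: the integrand can vanish, and the paper spends the last page analysing the equality case (forcing $|\WW^+||\WW^-|=0$ pointwise from the equality characterization in Lemma \ref{estimatelambda}, then deriving two incompatible pointwise relations between $|\omega_+|^{1/2}$ and $|\omega_-|^{1/2}$). Your appeal to unique continuation does not substitute for this rigidity step.
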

\begin{proof}
We prove by contradiction. Suppose the statement is false then there are non-trivial self-dual harmonic and anti-self-dual harmonic 2-forms $\omega_{\pm}$.  The assumptions of Prop \ref{integralestimate} are satisfied and we have,
\begin{align*} 
0\geq \int_M t^2& |\omega_-|^{2(\alpha-1)}\Big(2\alpha\RR_2(\omega_-, \omega_-)+ \frac{\lambda_1(4\alpha-1)}{2\alpha}|\omega_-|^{2}\Big)\\
+&|\omega_+|^{2(\alpha-1)}\Big(2\alpha\RR_2(\omega_+, \omega_+)+ \frac{\lambda_1(4\alpha-1)}{2\alpha}|\omega_+|^{2}\Big)\\
-&2t \frac{\lambda_1(4\alpha-1)}{2\alpha}|\omega_-|^{\alpha} |\omega_+|^{\alpha}.
\end{align*}

We can choose $\alpha=\frac{1}{2}$ to maximize $\frac{4\alpha-1}{\alpha^2}$. Thus, 
\begin{align*} 
0\geq \int_M t^2& |\omega_-|^{-1}\Big(\RR_2(\omega_-, \omega_-)+ \lambda_1|\omega_-|^{2}\Big)\\
+&|\omega_+|^{-1}\Big(\RR_2(\omega_+, \omega_+)+ \lambda_1|\omega_+|^{2}\Big)\\
-&2t \lambda_1|\omega_-|^{1/2} |\omega_+|^{1/2}.
\end{align*}
The integrand is a quadratic polynomial on $t$. Using (\ref{estimateR2}) and Lemma \ref{estimatelambda}, the leading term is at least 
\begin{align*}
|\omega_-|^{-1}\Big(\RR_2(\omega_-, \omega_-)+ \lambda_1|\omega_-|^{2}\Big) \geq & |\omega_-|(\frac{\SS}{3}-2\lambda_3^-+\lambda_1),\\
\geq & |\omega_-|(\frac{2\SS}{3}-4\frac{\SS(2\SS+9\lambda_1)}{12(\SS+3\lambda_1)}+\lambda_1),\\
\geq & |\omega_-|\lambda_1(1-\frac{\SS}{\SS+3\lambda_1})>0. 
\end{align*}
Now we compute its discriminant
\begin{align*}
D &= |\omega_-||\omega_+|\lambda_1^2-|\omega_-|^{-1}|\omega_+|^{-1}\Big(\RR_2(\omega_-, \omega_-)+ \lambda_1|\omega_-|^{2}\Big)\Big(\RR_2(\omega_+, \omega_+)+ \lambda_1|\omega_+|^{2}\Big).
\end{align*}
Since each term $\RR_2(\omega_{\pm}, \omega_{\pm})+ \lambda_1|\omega_{\pm}|^{2}> 0$, 
\begin{align*}
D &\leq |\omega_-||\omega_+|\Big(\lambda_1^2-(\frac{\SS}{3}-2\lambda_3^-+\lambda_1)(\frac{\SS}{3}-2\lambda_3^++\lambda_1)\Big)\\
& \leq |\omega_-||\omega_+|\Big(-(\frac{\SS}{3}-2\lambda_3^-)(\frac{\SS}{3}-2\lambda_3^+)-\lambda_1(\frac{2\SS}{3}-2\lambda_3^--2\lambda_3^+)\Big)\\
& \leq |\omega_-||\omega_+|\Big(|\lambda_3^+-\lambda_3^-|^2-(\frac{\SS}{3}-\lambda_3^--\lambda_3^+)^2-\lambda_1(\frac{2\SS}{3}-2\lambda_3^--2\lambda_3^+)\Big).
\end{align*}
Applying Lemma \ref{estimatelambda} again yields
\begin{align*} D &\leq |\omega_-||\omega_+|\Big(4(\frac{\SS(2\SS+9\lambda_1)}{12(\SS+3\lambda_1)}-\frac{\SS}{12})^2-\frac{1}{4}(\SS-4\frac{\SS(2\SS+9\lambda_1)}{12(\SS+3\lambda_1)})^2-\lambda_1(\SS-4\frac{\SS(2\SS+9\lambda_1)}{12(\SS+3\lambda_1)}))\Big)\\
&\leq |\omega_-||\omega_+|\Big(\frac{\SS^2}{36(\SS+3\lambda_1)^2}((\SS+6\lambda_1)^2-\SS^2)-\lambda_1\frac{\SS^2}{3(\SS+3\lambda_1)}\Big)\\
&\leq 0. 
\end{align*}
Thus, the integrand must be vanishing at each point and all inequalities above assume equality. In particular, by Lemma \ref{estimatelambda}, at each point
\[|\WW^-||\WW^+|=0.\]
If $|\WW^-|=0$, the integrand becomes
\begin{align*}
& t^2 |\omega_-|(\frac{\SS}{3}+\lambda_1)+|\omega_+|(\frac{\SS}{3}-2\lambda_3+\lambda_1)-2t \lambda_1|\omega_-|^{1/2} |\omega_+|^{1/2}\\
=&t^2 |\omega_-|(\frac{\SS}{3}+\lambda_1)+|\omega_+|(\frac{2\SS}{3}-4\frac{\SS(2\SS+9\lambda_1)}{12(\SS+3\lambda_1)}+\lambda_1)-2t\lambda_1|\omega_-|^{1/2} |\omega_+|^{1/2}
\\=&\Big(t|\omega_-|^{1/2}\sqrt{\frac{\SS+3\lambda_1}{3}}-\lambda_1\sqrt{\frac{3}{\SS+3\lambda_1}}|\omega_+|^{1/2}\Big)^2.
\end{align*}
Similarly, if $|\WW^+|=0$ then the integrand is
\[\Big(|\omega_+|^{1/2}\sqrt{\frac{\SS+3\lambda_1}{3}}-\lambda_1\sqrt{\frac{3}{\SS+3\lambda_1}}t|\omega_-|^{1/2}\Big)^2.\]
Thus, at each point, one of the following must hold, 
\begin{align}
\label{eq1}
|\omega_+|^{1/2} &= \frac{\SS+3\lambda_1}{3\lambda_1}t|\omega_-|^{1/2},\\
\label{eq2}
|\omega_+|^{1/2} &= \frac{3\lambda_1}{\SS+3\lambda_1}t|\omega_-|^{1/2}.
\end{align}
Next, the integral estimate assumes equality only if, by the proof of Prop. \ref{integralestimate},  
\begin{align}
\label{eq3}
\int_{M} |\omega_{+}|^{1/2}&= t \int_{M} |\omega_{-}|^{1/2};\\
\label{eq4}
0 &=\nabla(|\omega_{+}|^{1/2}+t |\omega_{-}|^{1/2}).
\end{align}
Let $\Omega_1, \Omega_2$ be the sets of points where (\ref{eq1}) and (\ref{eq2}) hold, respectively. If both are non-empty then they share a boundary on which $|\omega_{\pm}|=0$ because $\frac{\SS+3\lambda_1}{3\lambda_1}>1$. However, (\ref{eq4}) then implies that $|\omega_{\pm}|=0$ everywhere, a contradiction. So either $\Omega_1$ or $\Omega_2$ is empty and the other is the whole manifold. In that case, comparing with (\ref{eq3}) also leads to a contradiction.   

\end{proof}
Now we are ready to prove Theorem \ref{definitetheorem} and Corollary \ref{S2timeS2}.
\begin{proof}(\textbf{of Theorem \ref{definitetheorem}}) The result follows from Lemma \ref{relatedconditions} and Theorem \ref{definiteupper}.
\end{proof}
\begin{proof}(\textbf{of Corollary \ref{S2timeS2}}) From Section \ref{prem}, we know that $\mathbb{S}^2\times \mathbb{S}^2$ is indefinite as $b_+=b_-=1$. Thus, the result follows from Theorem \ref{definitetheorem}.

\end{proof}
\section{Harmonic Weyl Tensor}
\label{harmonicweyl}
In this section, we study a 4-manifold with harmonic Weyl curvature. Such a Riemannian manifold is characterized by the equation 
\[\delta \WW=0,\]
where $\delta$ is the divergent operator. Notably, this condition is a generalization of the Einstein equation. Indeed, an Einstein structure has constant Ricci curvature. Then, the weaker condition of having parallel Ricci tensor is equivalent to harmonic curvature which, in turn, means harmonic Weyl curvature and constant scalar curvature.  

For a manifold in dimension four, the decomposition induced by the Hodge star operator leads to 
\[\delta \WW^{\pm}=0.\]  
A. Derdzinski \cite{Derd83} observed the following Bochner formula. 
\begin{equation}
\label{BochnerW}
\Delta|\WW^{\pm}|^2 =2|\nabla\WW^\pm|^2+\SS|\WW^{\pm}|^2-36\det\WW^{\pm}.\end{equation}
Furthermore, there is an improved Kato's inequality observed by Gursky-LeBrun \cite{gl99} and Yang \cite{yangdg00} (shown to be optimal by \cite{branson00, CGH00weight}):
\begin{equation}
\label{improvedKW}
|\nabla\WW^{\pm}|^2 \geq \frac{5}{3}|\nabla|\WW^\pm||^2.
\end{equation}
Using equations (\ref{BochnerW}), (\ref{improvedKW}) and the same procedure as in Prop. \ref{integralestimate} yields the following. 
\begin{prop}

\label{Wintegralestimate}
Let $(M,g)$ be a closed four-dimensional manifold with harmonic Weyl curvature and $\lambda_1$ be its first eigenvalue of the Laplacian on functions. Suppose there exists $t>0$ such that 
\[ \int_{M} |\WW^{+}|^{\alpha}= t \int_{M} |\WW^{-}|^{\alpha},\]
then we have the following inequality,
\begin{align*}
0\geq \int_M & \alpha \Big(|\WW^+|^{2(\alpha-1)}(\SS|\WW^+|^2-36\det{\WW^+})+t^2|\WW^-|^{2(\alpha-1)}(\SS|\WW^-|^2-36\det{\WW^-})\Big)\\
&+\frac{\lambda_1(6\alpha-1)}{3\alpha}\Big(|\WW^+|^\alpha-t|\WW^-|^\alpha\Big)^2.
\end{align*}
\end{prop}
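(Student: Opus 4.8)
The plan is to mimic the argument of Proposition \ref{integralestimate} verbatim, replacing the Bochner formula (\ref{bochnerhar}) for harmonic two-forms by the Derdzinski Bochner formula (\ref{BochnerW}) for the self-dual/anti-self-dual Weyl curvature, and the Seaman Kato inequality (\ref{iKatohar}) (with constant $3/2$) by the Gursky--LeBrun--Yang Kato inequality (\ref{improvedKW}) (with constant $5/3$). Concretely, for $u:=|\WW^+|$, write $\Delta u^{2\alpha}=\alpha(\alpha-1)u^{2(\alpha-2)}|\nabla u^2|^2+\alpha u^{2(\alpha-1)}\Delta u^2$ and substitute $\Delta u^2 = 2|\nabla \WW^+|^2+\SS u^2 - 36\det\WW^+$ from (\ref{BochnerW}); similarly for $|\WW^-|$. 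As in the earlier proof, one should first assume $|\WW^{\pm}|>0$ pointwise, handling the general case by replacing $|\WW^{\pm}|^{2\alpha}$ with $(|\WW^{\pm}|^2+\epsilon)^{\alpha}$ and letting $\epsilon\to 0$, exactly as in \cite{caotran4}.

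The key steps, in order, are: (i) combine the two pointwise identities for $\Delta(|\WW^+|^{2\alpha}+t^2|\WW^-|^{2\alpha})$; (ii) use $\nabla|\WW^{\pm}|^2 = 2|\WW^{\pm}|\,\nabla|\WW^{\pm}|$ to rewrite the $|\nabla|\WW^{\pm}|^2|^2$ term, so that the gradient contribution is $2\alpha\big((|\nabla\WW^{\pm}|^2) + 2(\alpha-1)|\nabla|\WW^{\pm}||^2\big)$ times $|\WW^{\pm}|^{2(\alpha-1)}$; (iii) apply the Kato inequality (\ref{improvedKW}) to get $|\nabla\WW^{\pm}|^2 + 2(\alpha-1)|\nabla|\WW^{\pm}||^2 \geq \big(\tfrac{5}{3}+2\alpha-2\big)|\nabla|\WW^{\pm}||^2 = \tfrac{6\alpha-1}{3}|\nabla|\WW^{\pm}||^2$; (iv) integrate over the closed manifold $M$ so the left-hand side vanishes, yielding $0\geq \int_M \alpha\big(|\WW^+|^{2(\alpha-1)}(\SS|\WW^+|^2-36\det\WW^+)+t^2|\WW^-|^{2(\alpha-1)}(\SS|\WW^-|^2-36\det\WW^-)\big) + \tfrac{2\alpha(6\alpha-1)}{3}\big(|\WW^+|^{2(\alpha-1)}|\nabla|\WW^+||^2 + t^2|\WW^-|^{2(\alpha-1)}|\nabla|\WW^-||^2\big)$; (v) rewrite $|\WW^{\pm}|^{2(\alpha-1)}|\nabla|\WW^{\pm}||^2 = \tfrac{1}{\alpha^2}|\nabla|\WW^{\pm}|^{\alpha}|^2$, so the gradient term becomes $\tfrac{2(6\alpha-1)}{3\alpha}\big(|\nabla|\WW^+|^{\alpha}|^2 + t^2|\nabla|\WW^-|^{\alpha}|^2\big)$; (vi) apply $|\nabla f|^2 + t^2|\nabla h|^2 \geq \tfrac14|\nabla(f - t h)|^2$ pointwise and then the variational characterization $\int_M |\nabla \phi|^2 \geq \lambda_1 \int_M \phi^2$ to the function $\phi = |\WW^+|^{\alpha} - t|\WW^-|^{\alpha}$, which has zero average by the hypothesis $\int_M |\WW^+|^{\alpha} = t\int_M |\WW^-|^{\alpha}$. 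This replaces the gradient term by $\tfrac{\lambda_1(6\alpha-1)}{6\alpha}\int_M(|\WW^+|^{\alpha}-t|\WW^-|^{\alpha})^2$, giving the stated inequality after noting that in Proposition \ref{integralestimate} the analogous constant was $\tfrac{\lambda_1(4\alpha-1)}{2\alpha}$ and here the Kato constant $5/3$ in place of $3/2$ produces $6\alpha-1$ in place of $4\alpha-1$ and the factor $3\alpha$ in the denominator.

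Let me remark on the only subtle point. Comparing with Proposition \ref{integralestimate}: there the pointwise inequality used was $|\nabla f|^2 + t^2|\nabla h|^2 \geq \tfrac12 |\nabla(f-th)|^2$ (from $2(x^2+y^2)\geq (x-y)^2$), which combined with the coefficient $\tfrac{4\alpha-1}{\alpha}$ to give $\tfrac{4\alpha-1}{2\alpha}$, whereas here the coefficient before the Poincaré step is $\tfrac{2(6\alpha-1)}{3\alpha}$, and $\tfrac12\cdot\tfrac{2(6\alpha-1)}{3\alpha} = \tfrac{6\alpha-1}{3\alpha}$; applying $\lambda_1$ then yields $\tfrac{\lambda_1(6\alpha-1)}{3\alpha}$ — I should double-check against the statement, which has exactly $\tfrac{\lambda_1(6\alpha-1)}{3\alpha}$, so the bookkeeping is consistent once one is careful that Derdzinski's formula (\ref{BochnerW}) already has the factor $2$ on $|\nabla\WW^{\pm}|^2$ just as (\ref{bochnerhar}) does.

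The main obstacle is purely administrative rather than conceptual: keeping the numerical coefficients straight through steps (iii)--(vi), in particular the interplay between the $2$ in the Bochner formula, the Kato constant $5/3$, the chain-rule factor $\alpha^2$, the pointwise $\tfrac14$ (note: one must use $x^2+t^2y^2\geq\tfrac14(x-ty)^2$ here, not $\tfrac12$, since the cross term is $-2txy$ and $(x-ty)^2 = x^2+t^2y^2 - 2txy \le 2(x^2+t^2y^2)$ only gives $\tfrac12$; but the clean substitution via the variational characterization applied directly to $\phi$ after the polarization $|\nabla f|^2+t^2|\nabla h|^2\ge \tfrac14|\nabla(f-th)|^2$... ) — in any case, following the already-verified template of Proposition \ref{integralestimate} line by line with the two substituted ingredients produces the claim, and no new geometric input is needed beyond (\ref{BochnerW}) and (\ref{improvedKW}).
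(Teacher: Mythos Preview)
Your proposal is correct and is precisely the paper's approach: the paper's proof is literally ``use (\ref{BochnerW}) and (\ref{improvedKW}) and repeat the procedure of Proposition \ref{integralestimate},'' and your steps (i)--(vi) carry that out with the correct bookkeeping, arriving at the stated constant $\tfrac{\lambda_1(6\alpha-1)}{3\alpha}$. The only wobble is in step (vi), where you first write $\tfrac14$ for the polarization constant; as you correctly resolve in your final paragraph, the inequality is $x^2+t^2y^2\geq\tfrac12(x-ty)^2$ (since $2|txy|\leq x^2+t^2y^2$), and $\tfrac12\cdot\tfrac{2(6\alpha-1)}{3\alpha}=\tfrac{6\alpha-1}{3\alpha}$ matches the statement.
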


We can now state our main theorem in this section as the following. 

\begin{theorem}
\label{selfdualupper}
	Let $(M,g)$ be a closed four-dimensional manifold with harmonic Weyl curvature and positive scalar curvature and  $\lambda_1>0$ be its first eigenvalue of the Laplacian on functions. Suppose that \[\KK^\perp\leq \frac{\SS(2\SS+9\lambda_1)}{12(\SS+3\lambda_1)},\] then $M$ is either self-dual or anti-self-dual.
\end{theorem}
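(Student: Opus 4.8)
The plan is to mimic the proof of Theorem \ref{definiteupper}, but now feeding the Bochner--Kato package for $\WW^\pm$ (Proposition \ref{Wintegralestimate}) instead of the one for harmonic two-forms. I argue by contradiction: suppose $M$ is neither self-dual nor anti-self-dual, so both $\WW^+$ and $\WW^-$ are not identically zero. Since $M$ is closed and connected and $\WW^\pm$ satisfies the elliptic equation $\delta\WW^\pm=0$, the Bochner formula (\ref{BochnerW}) together with unique continuation forces $|\WW^\pm|>0$ on a dense open set, and in fact (using that $|\WW^\pm|$ cannot vanish on an open set) I may normalize $\int_M|\WW^+|^{1/2}=t\int_M|\WW^-|^{1/2}$ for a unique $t>0$, so the hypotheses of Proposition \ref{Wintegralestimate} hold. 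Choosing $\alpha=\tfrac12$ maximizes $(6\alpha-1)/\alpha^2$, and after regrouping the integrand becomes a quadratic in $t$ of the shape
\begin{align*}
0\geq \int_M & t^2|\WW^-|^{-1}\Big(\tfrac12(\SS|\WW^-|^2-36\det\WW^-)+\tfrac{4\lambda_1}{3}|\WW^-|^2\Big)\\
&+|\WW^+|^{-1}\Big(\tfrac12(\SS|\WW^+|^2-36\det\WW^+)+\tfrac{4\lambda_1}{3}|\WW^+|^2\Big)-2t\cdot\tfrac{4\lambda_1}{3}|\WW^-|^{1/2}|\WW^+|^{1/2}.
\end{align*}

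Next I bound the zero-order coefficients. Lemma \ref{estimatedet} gives $36\det\WW^\pm\leq 6\lambda_3^\pm|\WW^\pm|^2$, so the leading coefficient is at least $|\WW^-|\big(\tfrac{\SS}{2}-3\lambda_3^-+\tfrac{4\lambda_1}{3}\big)$, and Lemma \ref{estimatelambda} ($\tfrac{\SS}{3}-2\lambda_3^\pm\geq \tfrac{2\SS}{3}-4\delta$, hence $\tfrac{\SS}{2}-3\lambda_3^\pm\geq \tfrac{3}{2}(\tfrac{\SS}{3}-2\lambda_3^\pm)\geq \tfrac32(\tfrac{2\SS}{3}-4\delta)$) combined with the hypothesis $\delta=\KK^\perp_{\max}\leq \frac{\SS(2\SS+9\lambda_1)}{12(\SS+3\lambda_1)}$ shows this is strictly positive (the same arithmetic that appeared in Theorem \ref{definiteupper}, only with $\lambda_1\mapsto\tfrac43\lambda_1$ after dividing by the overall factor). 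Then I compute the discriminant $D$ of the quadratic; using positivity of both diagonal coefficients and again Lemmas \ref{estimatelambda} and \ref{estimatedet} to bound $\det\WW^\pm$ and $|\lambda_3^+-\lambda_3^-|$, I expect $D\leq |\WW^-||\WW^+|\big(|\lambda_3^+-\lambda_3^-|^2-(\tfrac{\SS}{3}-\lambda_3^+-\lambda_3^-)^2-\tfrac{4\lambda_1}{3}(\tfrac{2\SS}{3}-2\lambda_3^+-2\lambda_3^-)\big)$, which the curvature bound drives $\leq 0$ — this is the key computational step, essentially identical in structure to the discriminant estimate in the proof of Theorem \ref{definiteupper}.

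With $D\leq 0$ and positive leading coefficient, the integrand is pointwise $\geq 0$, so it must vanish identically and every intermediate inequality is an equality. In particular the equality case of Lemma \ref{estimatelambda} forces $|\WW^+||\WW^-|=0$ at every point; since each of $\WW^\pm$ is a nonzero real-analytic (or at least satisfying unique continuation) section, the zero set of each is nowhere dense, so one of them, say $\WW^-$, must vanish on a dense set and hence identically — contradicting the assumption that $M$ is not self-dual. To close the loop carefully, as in Theorem \ref{definiteupper} I must also exploit the equality conditions coming from Proposition \ref{Wintegralestimate} itself — namely $\int_M|\WW^+|^{1/2}=t\int_M|\WW^-|^{1/2}$ and $\nabla(|\WW^+|^{1/2}+t|\WW^-|^{1/2})=0$ — which, combined with the two possible pointwise proportionality relations $|\WW^+|^{1/2}=\tfrac{\SS+4\lambda_1}{4\lambda_1}t|\WW^-|^{1/2}$ or $|\WW^+|^{1/2}=\tfrac{4\lambda_1}{\SS+4\lambda_1}t|\WW^-|^{1/2}$ arising from the perfect-square form of the vanishing integrand, rule out the remaining degenerate scenario (the two sets on which these hold would meet along a locus where $|\WW^\pm|=0$, then the gradient condition propagates this everywhere). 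The main obstacle is purely bookkeeping: verifying that the same algebraic identity that made $D\leq 0$ in Theorem \ref{definiteupper} survives the replacement of the Weitzenböck coefficients $(\tfrac{\SS}{3}-2\lambda_3^\pm,\ \lambda_1)$ by $(\tfrac{\SS}{2}-3\lambda_3^\pm,\ \tfrac43\lambda_1)$ — which it does precisely because both pairs are proportional after scaling, and the hypothesis on $\KK^\perp$ is exactly the borderline value for which the scaled inequality is tight. Once Theorem \ref{selfdualupper} is established, Theorem \ref{selfdualtheorem} follows by invoking Lemma \ref{relatedconditions} to reduce conditions (2),(3),(4) to condition (1), exactly as in the deduction of Theorem \ref{definitetheorem}.
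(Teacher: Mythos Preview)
Your overall strategy matches the paper's, but there is a fatal computational error: $\alpha=\tfrac12$ does \emph{not} maximize $(6\alpha-1)/\alpha^2$. The maximum is at $\alpha=\tfrac13$, giving the value $9$ (versus $8$ at $\alpha=\tfrac12$). This is not a cosmetic slip. With $\alpha=\tfrac12$ the diagonal coefficients, after Lemma~\ref{estimatedet}, become $\tfrac{\SS}{2}-3\lambda_3^\pm=\tfrac32\big(\tfrac{\SS}{3}-2\lambda_3^\pm\big)$, while the eigenvalue contribution is $\tfrac{4}{3}\lambda_1$. These two rescalings are by \emph{different} factors ($\tfrac32$ versus $\tfrac43$), so your claim that ``both pairs are proportional after scaling'' is false. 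Concretely, writing $A^\pm=\tfrac{\SS}{3}-2\lambda_3^\pm$ and carrying the bounds from Lemma~\ref{estimatelambda} through at the threshold $\delta=\tfrac{\SS(2\SS+9\lambda_1)}{12(\SS+3\lambda_1)}$, your discriminant estimate yields
\[
D \;\leq\; 9\Big(\delta-\tfrac{\SS}{12}\Big)^2-\tfrac{9}{16}(\SS-4\delta)^2-2\lambda_1(\SS-4\delta)
\;=\;\frac{\lambda_1\,\SS^2}{12(\SS+3\lambda_1)}\;>\;0,
\]
so the quadratic-in-$t$ integrand is \emph{not} forced to be pointwise nonnegative, and the contradiction never materializes. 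The paper takes $\alpha=\tfrac13$; then the diagonal coefficients are exactly $\tfrac{\SS}{3}-2\lambda_3^\pm+\lambda_1$, identical to those in Theorem~\ref{definiteupper}, and the discriminant computation transfers verbatim to give $D\le 0$.

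A secondary remark: your endgame (the two proportionality relations and the gradient condition) is more elaborate than needed here. Once the integrand vanishes and Lemma~\ref{estimatelambda} forces $|\WW^+||\WW^-|=0$ pointwise, at any point where (say) $|\WW^-|=0$ the surviving term is $|\WW^+|^{2/3}\big(\tfrac{\SS}{3}-2\lambda_3^++\lambda_1\big)$ up to the determinant inequality, and you have already shown this coefficient is strictly positive under the hypothesis; hence $|\WW^+|=0$ there as well. So both vanish everywhere, contradicting the assumption directly --- no appeal to unique continuation or the propagation argument from Theorem~\ref{definiteupper} is required.
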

\begin{proof}
	We prove by contradiction. Suppose the statement is false then there are some $t>0, \alpha>0 $ such that
	\[ \int_{M} |\WW^{+}|^{\alpha}= t \int_{M} |\WW^{-}|^{\alpha}.\]
	Prop. \ref{integralestimate} yields that 
	\begin{align*} 
	0\geq \int_M t^2& |\WW^-|^{2(\alpha-1)}\Big(\alpha(\SS|\WW^-|^2-36\det{\WW^-})+ \frac{\lambda_1(6\alpha-1)}{3\alpha}|\WW^-|^{2}\Big)\\
	+&|\WW^+|^{2(\alpha-1)}\Big(\alpha(\SS|\WW^+|^2-36\det{\WW^+})+ \frac{\lambda_1(6\alpha-1)}{3\alpha}|\WW^+|^{2}\Big)\\
	-&2t \frac{\lambda_1(6\alpha-1)}{3\alpha}|\WW^-|^{\alpha} |\WW^+|^{\alpha}.
	\end{align*}
	We now choose $\alpha=\frac{1}{3}$ to maximize $\frac{6\alpha-1}{\alpha^2}$. Thus, 
	\begin{align*} 
	0\geq \int_M t^2& |\WW^-|^{-4/3}\Big(\frac{1}{3}(\SS|\WW^-|^2-36\det{\WW^-})+ \lambda_1|\WW^-|^{2}\Big)\\
	+&|\WW^+|^{-4/3}\Big(\frac{1}{3}(\SS|\WW^+|^2-36\det{\WW^+})+ \lambda_1|\WW^+|^{2}\Big)\\
	-&2t \lambda_1|\WW^-|^{1/3} |\WW_+|^{1/3}.
	\end{align*}
	The integrand is a quadratic polynomial of $t$. Using (\ref{estimatedet}) and Lemma \ref{estimatelambda}, the leading term is at least 
	\begin{align*}
	|\WW^-|^{-4/3}\Big(\frac{1}{3}(\SS|\WW^-|^2-36\det{\WW^-})+ \lambda_1|\WW^-|^{2}\Big) \geq & |\WW^-|^{2/3}(\frac{\SS}{3}-2\lambda_3^-+\lambda_1),\\
	\geq & |\WW^-|^{2/3}(\frac{2\SS}{3}-4\frac{\SS(2\SS+9\lambda_1)}{12(\SS+3\lambda_1)}+\lambda_1),\\
	\geq & |\WW^-|^{2/3}\lambda_1(1-\frac{\SS}{\SS+3\lambda_1})>0. 
	\end{align*}
	Now we compute its discriminant
	\begin{align*}
	D &= |\WW^-|^{2/3}|\WW^+|^{2/3}\lambda_1^2\\
&-|\WW^-|^{-4/3}|\WW^+|^{-4/3}\Big(\frac{1}{3}(\SS|\WW^-|^2-36\det{\WW^-})+ \lambda_1|\WW^-|^{2}\Big)\Big(\frac{1}{3}(\SS|\WW^+|^2-36\det{\WW^+})+ \lambda_1|\WW^+|^{2}\Big).
	\end{align*}
	Since each term $\frac{1}{3}(\SS|\WW^\pm|^2-36\det{\WW^\pm})+ \lambda_1|\WW^\pm|^{2}\geq 0$, 
	\begin{align*}
	D &\leq |\WW^-|^{2/3}|\WW^+|^{2/3}\Big(\lambda_1^2-(\frac{\SS}{3}-2\lambda_3^-+\lambda_1)(\frac{\SS}{3}-2\lambda_3^++\lambda_1)\Big)\\
	& \leq |\WW^-|^{2/3}|\WW^+|^{2/3}\Big(-(\frac{\SS}{3}-2\lambda_3^-)(\frac{\SS}{3}-2\lambda_3^+)-\lambda_1(\frac{2\SS}{3}-2\lambda_3^--2\lambda_3^+)\Big)\\
	& \leq |\WW^-|^{2/3}|\WW^+|^{2/3}\Big(|\lambda_3^+-\lambda_3^-|^2-(\frac{\SS}{3}-\lambda_3^--\lambda_3^+)^2-\lambda_1(\frac{2\SS}{3}-2\lambda_3^--2\lambda_3^+)\Big).
	\end{align*}
	Applying Lemma \ref{estimatelambda} again yields
	\begin{align*} D &\leq |\omega_-||\omega_+|\Big(4(\frac{\SS(2\SS+9\lambda_1)}{12(\SS+3\lambda_1)}-\frac{\SS}{12})^2-\frac{1}{4}(\SS-4\frac{\SS(2\SS+9\lambda_1)}{12(\SS+3\lambda_1)})^2-\lambda_1(\SS-4\frac{\SS(2\SS+9\lambda_1)}{12(\SS+3\lambda_1)}))\Big),\\
	&\leq |\omega_-||\omega_+|\Big(\frac{\SS^2}{36(\SS+3\lambda_1)^2}((\SS+6\lambda_1)^2-\SS^2)-\lambda_1\frac{\SS^2}{3(\SS+3\lambda_1)}\Big)\\
	&\leq 0. 
	\end{align*}
	Thus, the integrand must be vanishing at each point and all inequalities above assume equality. In particular, by Lemma \ref{estimatelambda}, at each point
	\[|\WW^-||\WW^+|=0.\]
	If $|\WW^-|=0$ the integrand becomes \[|\WW^+|^{-4/3}\Big(\frac{1}{3}(\SS|\WW^+|^2-36\det{\WW^+}).\]
	Thus, it is vanishing if $|\WW^+|=0$. So $|\WW^-|=0=|\WW^+|$. A similar argument applies when $|\WW^+|=0$ to conclude that  $|\WW^-|=0=|\WW^+|$ everywhere, this is a contradiction. 
	\end{proof} 
	Theorem \ref{selfdualtheorem} and Corollary \ref{classselfdual} now follow immediately. 
	\begin{proof}(\textbf{of Theorem \ref{selfdualtheorem}}) The result follows from Lemma \ref{relatedconditions} and Theorem \ref{selfdualupper}.
	
	\end{proof}
\begin{proof}(\textbf{of Corollary \ref{classselfdual}}) Let define the following set 
\[ \Omega_1:=\{ p\in M; \Rc(p)\neq \frac{\SS}{4}g\}.\]
By a result of Derdzinski \cite[Corollary 1]{Derd88}, at a point (if any) $p\in \Omega_1$, $\WW^{\pm}$ have the same spectra, including multiplicities. That is $|\WW^+|=|\WW^-|=0$. 

If $\Omega_1$ is empty, then $(M, g)$ is an Einstein metric. By Theorem \ref{selfdualtheorem}, $(M,g)$ must be a self-dual or anti-self-dual Einstein manifold. Then it must be homothetically isometric to $\mathbb{CP}^2$ with its Fubini-Study metric or $\mathbb{S}^4$ with the round metric or its quotient by Hitchin's theorem \cite[Theorem 13.30]{Besse}. 

Otherwise, $\Omega_1$ is non-empty and contains an open set. In this set, $|\WW^+|=|\WW^-|=0$. The analyticity assumption then implies that $|\WW|\equiv 0$ everywhere. That is, $(M,g)$ is locally conformally flat.   	
	\end{proof}	
\section{Einstein Structures}
\label{einsteinsection}

In this section, we investigate an Einstein manifold with positive scalar curvature. A Riemannian manifold $(M,g)$ is called Einstein if it satisfies 
\begin{equation}
\label{einstein}
\Rc=\lambda g,
\end{equation}
where $\lambda$ is a constant. By rescaling if necessary, we can assume that 
\[\Rc=g.\]
Generally, by Myer's theorem \cite{myer41}, if the scalar curvature is positive then then $M$ is compact and has a finite fundamental group. Consequently, $b_1=0$. In dimension four, there are not many compact examples. In fact, all known examples which are simply connected with non-negative sectional curvature are already listed in Section \ref{prem}.

Also, it is noted that, since $\Rc-\frac{\SS}{4}g\equiv 0$, equation (\ref{curvdec}) implies that, for any plane $P$,
\[\KK(P)=\KK(P^\perp).\]
In particular, 
\[\KK^\perp (P)=\KK(P).\]

Then Corollary \ref{Eupperbound} is immediate.
\begin{proof}(\textbf{of Corollary \ref{Eupperbound}}) By Theorem \ref{selfdualtheorem}, $(M,g)$ must be a self-dual or anti-self-dual Einstein manifold. Then applying Hitchin's classification \cite[Theorem 13.30] {Besse} yields the desired conclusion.

\end{proof}

The proof of Theorem \ref{Epositiveintersectionthm} follows from a different argument. We first recall the following useful results. The first lemma says that an upper bound actually leads to a lower bound, which is better than the a priori bound coming from the algebraic relations.   
\begin{lemma}\label{lowerfromupper}(\cite[Lemma 3.3]{caotran4})
	\label{Kupper}
	Suppose that $\KK_{\max}=\alpha\leq 1$, then we have:
	\begin{align*}
	\KK_{\min} & \geq \frac{1}{28}(15-8\alpha-\sqrt{3}\sqrt{96\alpha^2-80\alpha+19}).
	\end{align*} 
\end{lemma}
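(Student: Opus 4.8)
The plan is to work entirely algebraically at a single point, using Berger's normal form (Proposition \ref{berger}) together with the Einstein hypothesis $\Rc=g$, which forces $\SS=4$ and also forces the off-diagonal block of $\RR$ to vanish, so that $\RR = \frac{1}{3}\id + \WW$ splits as $\mathrm{Diag}\{\frac{1}{3}\id+\WW^+,\ \frac{1}{3}\id+\WW^-\}$. Since $\WW^{\pm}$ act on the three-dimensional spaces $\wedge^{\pm}$ and are trace-free, write their eigenvalues as $\lambda_i^{\pm}$ with $\sum_i\lambda_i^{\pm}=0$. Sectional curvatures of coordinate planes then become $\KK(e_i\wedge e_j)=\frac{1}{3}+\WW_{ijij}$, and the entries $\WW_{ijij}$ are expressed through the $a_k=\frac12(\lambda_k^++\lambda_k^-)$ and $b_k=\frac12(\lambda_k^+-\lambda_k^-)$ of Proposition \ref{berger}. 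First I would record the extremal planes: $\KK_{\max}$ is attained at a plane whose $\WW$-value is $a_3=\frac12(\lambda_3^++\lambda_3^-)$ and $\KK_{\min}$ at the plane with value $a_1=\frac12(\lambda_1^++\lambda_1^-)$, using items (1) and (2) of Proposition \ref{berger} (note that since $\Rc=g$, $\KK(P)=\KK(P^\perp)$, so biorthogonal and ordinary sectional curvature coincide and the max/min over all planes reduce to max/min of the $\WW(a,b,a,b)$).

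Next I would set up the optimization. The constraint $\KK_{\max}=\alpha$ reads $\frac13+a_3=\alpha$, i.e.\ $a_3=\alpha-\frac13$. We want to minimize $\KK_{\min}=\frac13+a_1$ over all Weyl tensors $\WW^{\pm}$ compatible with this constraint, i.e.\ subject to: $\lambda_1^{\pm}\le\lambda_2^{\pm}\le\lambda_3^{\pm}$, $\sum_i\lambda_i^{\pm}=0$, $a_3=\alpha-\frac13$, the Berger inequalities (6) of Proposition \ref{berger} ($|b_j-b_k|\le a_j-a_k$), and — crucially — the \emph{upper} bound $a_3$ being the actual maximum, which forces $a_3\ge a_2\ge a_1$ and $a_3\ge \frac12(\lambda_3^++\lambda_3^-)\ge\lambda_j^{\pm}$-type bounds. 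The key mechanism is that on each factor $\WW^{\pm}$, trace-freeness plus the ordering gives $\lambda_3^{\pm}\ge 0 \ge \lambda_1^{\pm}$ and $\lambda_1^{\pm}\ge -2\lambda_3^{\pm}$, while $|\WW^{\pm}|^2 = (\lambda_1^{\pm})^2+(\lambda_2^{\pm})^2+(\lambda_3^{\pm})^2$ is controlled by $\lambda_3^{\pm}$. The Gauss–Bonnet–Chern integrand $|\WW|^2-\frac12|\Rc-\frac{\SS}4\id|^2+\frac{\SS^2}{24}$ is not used here; rather the bound comes purely from these pointwise algebraic constraints, and the number $\frac{1}{28}(15-8\alpha-\sqrt3\sqrt{96\alpha^2-80\alpha+19})$ should emerge as the solution of a quadratic obtained by saturating the extremal configuration.

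The concrete extremal configuration I would guess is the one where one of $\WW^{\pm}$ (say $\WW^-$) is as ``spread'' as allowed and $b_k=a_k$ for all $k$ (saturating Berger's inequality (6)), so $\WW^-=0$ and $\WW^+$ has eigenvalues $(\lambda_1^+,\lambda_2^+,\lambda_3^+)$ with $\lambda_3^+=2a_3=2(\alpha-\frac13)$ and $\lambda_1^+=\lambda_2^+=-a_3$... but that gives $\KK_{\min}$ too large, so in fact the minimizing configuration must trade off: push $\lambda_1^+$ down as far as $\lambda_1^+=-2\lambda_3^+$ allows while keeping $a_1=\frac12(\lambda_1^++\lambda_1^-)$ minimal subject to $\lambda_1^-\ge -2\lambda_3^-$ and $\lambda_3^++\lambda_3^-=2(\alpha-\frac13)$. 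Minimizing $\lambda_1^++\lambda_1^-\ge -2(\lambda_3^++\lambda_3^-) = -4(\alpha-\frac13)$ naively gives $\KK_{\min}\ge\frac13-2(\alpha-\frac13)=1-2\alpha$, which is the crude algebraic bound; the lemma asserts something strictly better, so the improvement must come from the fact that $\lambda_1^{\pm}=-2\lambda_3^{\pm}$ and $\lambda_1^{\pm}=\lambda_2^{\pm}$ cannot hold simultaneously with $a_3$ being the max unless the $b_k$'s are forced, and from a quadratic interaction between the two factors. The hard part will be pinning down exactly which constraints are active at the minimizer and verifying that the resulting Lagrange/boundary system has the stated radical as its root — this is a finite but delicate case analysis over which of the Berger inequalities and ordering inequalities are tight. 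Since this lemma is quoted verbatim from \cite[Lemma 3.3]{caotran4}, I would in practice simply invoke that reference; the sketch above indicates how one reconstructs it from first principles if needed.
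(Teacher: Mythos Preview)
The paper does not prove this lemma at all; it simply cites \cite[Lemma 3.3]{caotran4}. You correctly note this at the end of your proposal, so in that minimal sense your proposal matches the paper: just invoke the reference.

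However, your ``reconstruction from first principles'' sketch has a genuine gap. You set the problem up as a purely pointwise algebraic optimization over the Berger normal form, and you yourself observe that this only yields the crude bound $\KK_{\min}\ge 1-2\alpha$, coming from $\lambda_1^{\pm}\ge -2\lambda_3^{\pm}$. You then speculate that the improvement to the stated radical must come from some further algebraic constraint (``quadratic interaction between the two factors''), but no such pointwise constraint exists: at a single point the Weyl eigenvalues $\lambda_i^{\pm}$ are free subject only to trace-freeness and ordering, so the pointwise minimum really is $1-2\alpha$. The sharper bound in the lemma is \emph{not} a pointwise algebraic fact; it is a global statement about Einstein four-manifolds. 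As the paper's Introduction says explicitly in describing the proof of Theorem~\ref{Epositiveintersectionthm}, the observation from \cite{caotran4} is that ``making use of elliptic equations, which arise from Ricci flow computation, an upper bound would imply a lower bound.'' Concretely, on an Einstein manifold the curvature operator satisfies an elliptic equation (Hamilton's matrix Harnack / Ricci-flow type identity), and applying the maximum principle at a point where $\KK_{\min}$ is attained produces a quadratic inequality relating $\KK_{\min}$ and $\KK_{\max}$ whose root is the stated expression. Your case analysis over Berger inequalities will never find this mechanism, because the essential ingredient is analytic, not algebraic.
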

The next result gives an estimate on the Euler characteristic when the sectional curvature is bounded above and below. 
\begin{lemma}
	\label{eulerestimate}(\cite[Corollary 3.1]{caotran4})Suppose that $\beta\leq \KK_{\min}\leq \KK_{\max} \leq \alpha$, then 
	\[8\pi^2 \chi(M) \leq \Big(8(\alpha^2-(1-\beta)(\alpha+\beta))+\frac{10}{3}\Big)\text{Vol}(M).\]
\end{lemma}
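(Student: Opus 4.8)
The plan is to combine the Gauss--Bonnet--Chern formula with a pointwise algebraic bound on $|\WW|^2$ extracted from Berger's normal form. Since $(M,g)$ is Einstein with $\Rc=g$, the trace-free Ricci tensor vanishes and $\SS=4$, so formula (\ref{Euler}) collapses to
\[8\pi^2\chi(M)=\int_M\Big(|\WW|^2+\frac{2}{3}\Big)\,dv.\]
Thus it suffices to prove the pointwise estimate
\[|\WW|^2\leq 8\big(\alpha^2-(1-\beta)(\alpha+\beta)\big)+\frac{8}{3}\]
wherever $\beta\leq \KK\leq\alpha$, and then integrate and add the $\tfrac{2}{3}$ contribution of the scalar term, which accounts for the $\tfrac{10}{3}$ in the statement.

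To get the pointwise bound I would diagonalize $\WW$ as in Prop.~\ref{berger}, writing $A=\diag\{a_1,a_2,a_3\}$ and $B=\diag\{b_1,b_2,b_3\}$. Because the eigenvalues of $\WW^\pm$ are $\lambda_i^\pm=a_i\pm b_i$ (see (\ref{eigenvalue})), expanding gives the clean identity
\[|\WW|^2=\sum_i(\lambda_i^+)^2+\sum_i(\lambda_i^-)^2=2\sum_i a_i^2+2\sum_i b_i^2.\]
Next, since $\Rc=g$ forces $\KK(a,b)=\WW(a,b,a,b)+\tfrac{\SS}{12}$ for every orthonormal pair, parts (1)--(2) of Prop.~\ref{berger} identify $a_1=\KK_{\min}-\tfrac13$ and $a_3=\KK_{\max}-\tfrac13$, so the hypotheses translate into $a_1\geq\beta-\tfrac13$ and $a_3\leq\alpha-\tfrac13$.

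The remaining work is two purely algebraic estimates. First, I would control the off-diagonal block: since $\sum a_i=\sum b_i=0$ by part (5), the elementary identity $\sum_i x_i^2=\tfrac13\sum_{i<j}(x_i-x_j)^2$ holds for both $x=a$ and $x=b$, and combining it with the Berger inequalities $|b_j-b_i|\leq a_j-a_i$ of part (6) yields $\sum_i b_i^2\leq\sum_i a_i^2$, hence $|\WW|^2\leq 4\sum_i a_i^2$. Second, using $a_2=-a_1-a_3$, I would write $\sum_i a_i^2=2a_1^2+2a_3^2+2a_1a_3$ and maximize it over the feasible box $a_1\geq\beta-\tfrac13$, $a_3\leq\alpha-\tfrac13$; the maximum is attained at the corner $a_1=\beta-\tfrac13$, $a_3=\alpha-\tfrac13$, and substituting reproduces exactly $4\sum_i a_i^2=8(\alpha^2-(1-\beta)(\alpha+\beta))+\tfrac83$. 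The two equality models $\mathbb{S}^4$ (where $\alpha=\beta=\tfrac13$) and $\mathbb{CP}^2$ (where $\alpha=\tfrac23,\ \beta=\tfrac16$) give a consistency check, since both saturate the estimate.

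The main obstacle I anticipate is the corner-optimization of $\sum_i a_i^2$: one must verify that the constrained maximum is genuinely attained at $a_1=\beta-\tfrac13,\ a_3=\alpha-\tfrac13$ rather than at an interior critical point, by checking the signs of the partial derivatives $4a_1+2a_3$ and $4a_3+2a_1$ on the feasible region and confirming that the ordering $a_1\leq a_2\leq a_3$ stays compatible with the corner. By contrast, the inequality $\sum_i b_i^2\leq\sum_i a_i^2$ should fall out immediately from the trace-free identity once part (6) of Prop.~\ref{berger} is invoked, and the Gauss--Bonnet reduction is straightforward given the Einstein normalization.
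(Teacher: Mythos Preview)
The paper does not actually prove this lemma; it is quoted verbatim from \cite[Corollary~3.1]{caotran4} and used as a black box in the proof of Theorem~\ref{Epositiveintersectionthm}. So there is no in-paper argument to compare against, and your proposal stands on its own.

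Your strategy is sound and does reconstruct a valid proof. The reduction via Gauss--Bonnet--Chern with $\Rc=g$, the identity $|\WW|^2=2\sum a_i^2+2\sum b_i^2$, the bound $\sum b_i^2\le\sum a_i^2$ from the trace-free identity and Prop.~\ref{berger}(6), and the algebraic identity $4\sum a_i^2\big|_{a_1=\beta-\frac13,\,a_3=\alpha-\frac13}=8(\alpha^2-(1-\beta)(\alpha+\beta))+\tfrac{8}{3}$ are all correct, and the two model spaces do saturate the bound as you say.

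The only place that needs care is exactly the one you flag. Your sign computation $\partial_{a_1}f=2(2a_1+a_3)\le 0$ and $\partial_{a_3}f=2(a_1+2a_3)\ge 0$ is valid precisely \emph{because} of the ordering constraints $a_1\le a_2\le a_3$ (equivalently $2a_1+a_3\le 0$ and $a_1+2a_3\ge 0$); over the bare box $\{a_1\ge A,\,a_3\le B\}$ the function $f$ is unbounded, so you cannot literally ``maximize over the box.'' Moreover the corner $(A,B)=(\beta-\tfrac13,\alpha-\tfrac13)$ need not itself lie in the ordering region (e.g.\ when $2\beta+\alpha>1$). What remains true, and is what you need, is that the maximum of the convex function $f$ over the polygon $\{a_1\ge A,\ a_3\le B,\ 2a_1+a_3\le 0,\ a_1+2a_3\ge 0\}$ is attained at a vertex, and a short check of the possible vertices shows each has value at most $f(A,B)$ (the cases $B\ge 2|A|$ and $|A|\ge 2B$ reduce to $3A^2$ and $3B^2$ respectively, and both are $\le f(A,B)$ via the factorizations $(B-2|A|)(B+|A|)\ge0$ and $(|A|-2B)(|A|+B)\ge0$). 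Once you phrase it this way the obstacle dissolves; your monotonicity argument already covers the generic case where the corner is feasible.
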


Moreover, there is an integral gap theorem for the self-dual Weyl curvature. 
\begin{theorem} \label{l2Wlower}(\cite[Theorem 1]{gl99}) 
	Let $(M,g)$ be a compact oriented Einstein 4-manifold with positve scalar curvature and $\WW^+\not\equiv 0$. Then,
	\[\int_{M} |\WW^+|^2 d\mu \geq \int_{M}\frac{\SS^2}{24} d\mu,\]
	with equality iff $\nabla \WW^+\equiv 0$.  
\end{theorem}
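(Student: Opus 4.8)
The plan is to convert the two pointwise tools already recorded for $\WW^+$ — the Derdzinski--Bochner formula (\ref{BochnerW}) and the refined Kato inequality (\ref{improvedKW}) — into a single elliptic differential inequality for the scalar function $u:=|\WW^+|$, and then to integrate it while tracking the numerical constants so that the sharp factor $\frac{1}{24}$ survives. Since $g$ is Einstein, $\SS$ is constant and $\delta\WW^+=0$, so both (\ref{BochnerW}) and (\ref{improvedKW}) apply. On the open set $\{u>0\}$, writing $\frac{1}{2}\Delta u^2=u\Delta u+|\nabla u|^2$ in (\ref{BochnerW}) and inserting (\ref{improvedKW}) gives
\begin{equation*}
u\,\Delta u \;\geq\; \tfrac{2}{3}|\nabla u|^2+\tfrac{\SS}{2}u^2-18\det\WW^+ .
\end{equation*}
The zero set of $u$ is handled by the usual regularization $u^2+\epsilon$, $\epsilon\to 0$, as in Prop.\ \ref{integralestimate}.

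Next I would feed in a sharp algebraic bound on the determinant. Lemma \ref{estimatedet} gives $36\det\WW^+\leq 6\lambda_3^+|\WW^+|^2$; combining it with the elementary fact that a trace-free symmetric endomorphism of a three-dimensional space with eigenvalues $\lambda_1^+\leq\lambda_2^+\leq\lambda_3^+$ satisfies $\lambda_3^+\leq\sqrt{2/3}\,|\WW^+|$ (with equality exactly when $\lambda_1^+=\lambda_2^+$) yields $18\det\WW^+\leq\sqrt{6}\,u^3$, with equality precisely at the $\mathbb{CP}^2$ eigenvalue pattern $(-t,-t,2t)$. Substituting and discarding the nonnegative gradient term produces the clean inequality $\Delta u\geq \tfrac{\SS}{2}u-\sqrt{6}\,u^2$. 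The conformally natural substitution is $\phi:=u^{1/2}$: because $\int_M|\WW^+|^2\,dv$ is a conformal invariant in dimension four, and the conformal metric $\tilde g=u\,g$ normalizes $|\WW^+|_{\tilde g}\equiv 1$, this power is exactly the one turning the inequality into a \emph{critical} semilinear one, of the form $-6\Delta\phi+\SS\phi\leq 3\sqrt{6}\,\phi^3$ modulo a favorable zeroth- and first-order remainder, which is precisely the shape governed by the Yamabe/conformal Laplacian $-6\Delta+\SS$.

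The main obstacle is to extract the \emph{sharp} constant $\frac{1}{24}$ rather than a weaker one, and here two naive reductions each fall short. The substitution $u^{1/3}$, which is optimal for the refined Kato inequality, annihilates the gradient term but produces a supercritical nonlinearity $u^{4/3}$ that the Sobolev embedding cannot absorb; while the critical substitution $\phi=u^{1/2}$ gives an inequality to which the Yamabe inequality applies, but only with the generic Yamabe constant, which is sharp solely in the conformal class of the round sphere and hence loses the factor on a general Einstein manifold. The crux, and the content of Gursky--LeBrun's argument, is to integrate the differential inequality while \emph{retaining} the full gradient term and to exploit that $\SS$ is constant, so that $\int_M\SS^2=\SS^2\,\mathrm{Vol}(M)$ is pinned directly to the conformal geometry; this is what upgrades the estimate to $\int_M|\WW^+|^2\geq\int_M\frac{\SS^2}{24}$. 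Finally, for the equality statement I would trace the chain backwards: equality forces simultaneous equality in (\ref{improvedKW}) and in the determinant bound, so $u$ is constant and two eigenvalues of $\WW^+$ coincide everywhere; since $\WW^+\not\equiv 0$, the equality case of the refined Kato inequality then forces $\nabla\WW^+\equiv 0$, i.e.\ $\WW^+$ is parallel with the $\mathbb{CP}^2$ Kähler--Einstein spectrum.
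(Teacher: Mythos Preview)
The paper does not prove this theorem; it is quoted verbatim from Gursky--LeBrun \cite{gl99} and used as a black box in the proof of Theorem~\ref{Epositiveintersectionthm}. So there is no in-paper argument to compare against, and your sketch must be judged on its own.

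On its own, the sketch has a genuine gap at the decisive step. You correctly assemble the Bochner formula, the refined Kato inequality, and the sharp determinant bound into a differential inequality of Yamabe type, but then you explicitly discard the conformal-Laplacian route, writing that it ``only [works] with the generic Yamabe constant, which is sharp solely in the conformal class of the round sphere,'' and you fall back on an unspecified ``content of Gursky--LeBrun's argument'' that you never supply. This is exactly where the argument is lost: by Obata's theorem, an Einstein metric with positive scalar curvature \emph{is} a Yamabe minimizer in its conformal class, so the Yamabe constant equals $\SS\cdot\text{Vol}(M,g)^{1/2}$ exactly---not only on the round sphere. That is the missing idea.

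Concretely, the correct power is $\phi=|\WW^+|^{1/3}$ (not $|\WW^+|^{1/2}$): your own computation then gives $(-6\Delta+\SS)\phi\le 2\sqrt{6}\,\phi^4$, which says that the conformal metric $\tilde g=\phi^2 g$ has $\tilde\SS\le 2\sqrt{6}\,|\WW^+|_{\tilde g}$. The Yamabe-minimizer property of $g$ yields $\int_M\tilde\SS\,dv_{\tilde g}\ge \SS\,\text{Vol}(M,g)^{1/2}\,\text{Vol}(M,\tilde g)^{1/2}$; combining this with Cauchy--Schwarz on $\int|\WW^+|_{\tilde g}\,dv_{\tilde g}$ and the conformal invariance of $\int|\WW^+|^2$ gives $\SS^2\,\text{Vol}(M,g)\le 24\int_M|\WW^+|^2$ with the sharp constant. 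Your equality discussion is essentially correct once this step is in place.
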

We are now ready to prove Theorem \ref{Epositiveintersectionthm}. 
\begin{proof} (\textbf{of Theorem \ref{Epositiveintersectionthm}}.)
By Lemma \ref{lowerfromupper}, $\KK\leq 1$ implies 
\[\KK\geq \frac{1}{28}(7-\sqrt{105}):=\beta.\]
Applying Lemma \ref{eulerestimate} then yields,
\begin{equation}
\label{s5eq1}
8\pi^2 \chi(M)\leq (8\beta^2+\frac{10}{3})\text{Vol}(M).\end{equation}
Combining the identities for Euler characteristic  (\ref{Euler}) and signature (\ref{signature}) leads to
\begin{align*}
(2\chi-3\tau)(M) &= \frac{1}{4\pi^2}\int_M (2|\WW^-|^2+\frac{\SS^2}{24})d\mu.
\end{align*} 
If $\WW^-\not\equiv 0$, then, by Theorem \ref{l2Wlower}(reversing the orientation of $M$ interchanges $\WW^+$ and $\WW^-$), we have
\begin{equation}\label{s5eq2}
(2\chi-3\tau)(M)\geq \frac{3}{4\pi^2}\int_{M}\frac{\SS^2}{24}d\mu=\frac{1}{2\pi^2}\text{Vol}(M).
\end{equation}
Combining equation (\ref{s5eq1}) with equation (\ref{s5eq2}) then yields
\begin{align*}
(2\chi-3\tau)(M) &\geq \frac{4\chi(M)}{8\beta^2+\frac{10}{3}},\\
(2-\frac{4}{8\beta^2+\frac{10}{3}})\chi(M) &\geq 3\tau(M).
\end{align*}
By reversing the direction, we obtain 
\[(2-\frac{4}{8\beta^2+\frac{10}{3}})\chi(M) \geq -3\tau(M).\]
Then we have,
\begin{align*}
(2-\frac{4}{8\beta^2+\frac{10}{3}})\chi(M) &\geq 3|\tau(M)|=3b_{+}\\
&\geq 2+b_{+}=\chi(M).
\end{align*}
Therefore, 
\begin{align*}
(2-\frac{4}{8\beta^2+\frac{10}{3}}) &\geq 1,\\
8\beta^2+\frac{10}{3} &\geq 4.
\end{align*}
The last inequality is a contradiction to the definition of $\beta$. 
\end{proof}
\def\cprime{$'$}
\bibliographystyle{plain}
\bibliography{bioEin}
\end{document}